\newtheorem{thm}{Theorem}%[section]
\newtheorem*{thm*}{Theorem}
\newtheorem*{lem*}{Lemma}
\newtheorem{prop}{Proposition}
\newtheorem*{prop*}{Proposition}
\newtheorem*{cor*}{Corollary}
\newtheorem*{conj*}{Conjecture}
\theoremstyle{definition}
\newtheorem*{defn*}{Definition}
\newtheorem{example}{Example}
\newtheorem*{example*}{Example}
\newtheorem*{prob*}{Problem}
\newtheorem{rmk}{Remark}
\newtheorem*{rmk*}{Remark}
\newtheorem{quest}{Question}
\newtheorem*{quest*}{Question}
\renewcommand{\labelenumi}{(\alph{enumi})}
\newcommand{\R}{\mathbb{R}} % real numbers
\newcommand{\Z}{\mathbb{Z}} % integers
\newcommand{\A}{\mathcal{A}} % cal A
\newcommand{\End}{\operatorname{End}} % endomorphisms
\newcommand{\Sym}{\mathfrak{S}} % symmetric group
\newcommand{\Ptn}{\mathcal{P}} % partition algebra
\newcommand{\V}{\mathbf{V}} % vector space
\newcommand{\vv}{\mathbf{v}} % basis elt
\newcommand{\im}{\operatorname{im}} % image
\newcommand{\nn}{[n]} % interval \Z[1, ..., n]
\newcommand{\CC}{\operatorname{Cons}} % consecutive cycles
\newcommand{\ov}{\overline}
\newcommand{\ul}{\underline}
\newcommand{\Hec}{\mathcal{H}} % Hecke algebra
\newcommand{\LP}{\Z[v,v^{-1}]} % Laurent poly ring
\newcommand{\IS}{\operatorname{IS}} % len of max incr subseq
\newcommand{\DS}{\operatorname{DS}} % len of max decr subseq
\newcommand{\rev}{\overleftarrow} % reverse arrow
\newcommand{\RSK}{\operatorname{RSK}} % RSK-shape 
\newcommand{\cell}{\mathfrak{R}} % KL cell
\newcommand{\fs}{\mathfrak{s}} % tab s
\newcommand{\ft}{\mathfrak{t}} % tab t
\newcommand{\Tab}{\mathbb{T}} % tableaux set
\newcommand{\dom}{\unrhd}  %dominates
\newcommand{\notdom}{\ntrianglerighteq} %doesn't dominate
\newcommand{\lessdom}{\unlhd}  %less dominant
\newcommand{\slessdom}{\lhd}  %strictly less dominant
\newcommand{\notlessdom}{\ntrianglelefteq}  %not less dominant
\title[Doubly stochastic matrices and Schur--Weyl duality]%
      {Doubly stochastic matrices and Schur--Weyl duality
        for partition algebras}
      \author{Stephen R.~Doty}
      \email{doty@math.luc.edu}
      \address{Department of Mathematics and Statistics,
        Loyola University Chicago, Chicago, IL 60660 USA}
      \subjclass{05A05, 05E10, 20C30, 20C08}
      \keywords{Hecke algebras, symmetric groups, Kazhdan--Lusztig bases,
        Murphy bases, longest increasing subsequences, Kronecker powers}
\begin{document}
\begin{abstract}\noindent
We prove that the permutations of $\{1,\dots, n\}$ having an
increasing (resp., decreasing) subsequence of length $n-r$ index a
subset of the set of all $r$th Kronecker powers of $n \times n$
permutation matrices which is a basis for the linear span of that
set. Thanks to a known Schur--Weyl duality, this gives a new basis for
the centralizer algebra of the partition algebra acting on the $r$th
tensor power of a vector space. We give some related results on the
set of doubly stochastic matrices in that algebra.
\end{abstract}\maketitle

\section*{Introduction}\noindent
Let $\V$ be a free $\Bbbk$-module with basis $\{\vv_1, \dots, \vv_n\}$
over a unital commutative ring $\Bbbk$; identify $m \in \Z$ with its
image under the natural ring morphism $\Z \to \Bbbk$. The symmetric
group $W_n$ on $n$ letters acts on $\V$ by permuting the basis, via $w
\cdot \vv_j = \vv_{w(j)}$ extended linearly. This action extends to a
``diagonal'' action on the $r$th tensor power $\V^{\otimes r} = \V
\otimes \cdots \otimes \V$, by
\begin{equation}
w \cdot (\vv_{j_1} \otimes \vv_{j_2} \otimes \cdots \otimes \vv_{j_r})
= \vv_{w(j_1)}\otimes \vv_{w(j_2)} \otimes \cdots \otimes \vv_{w(j_r)}
\end{equation}
for any $j_1, j_2, \dots, j_r \in \nn:= \{1,\dots, n\}$. In other
words, the matrix of the action of $w$, taken with respect to the basis
\begin{equation}
\vv_{i_1} \otimes \vv_{i_2} \otimes \cdots \otimes \vv_{i_r} \quad
(i_1,\dots, i_r \in \nn),
\end{equation}
is the $r$th Kronecker power $P(w)^{\otimes r}$ of the $n \times n$
permutation matrix $P(w):= [\delta_{i,w(j)}]$. Extending the action
linearly to the group algebra $\Bbbk[W_n]$ makes $\V^{\otimes r}$ into
a $\Bbbk[W_n]$-module. Identify $\End_\Bbbk(\V^{\otimes r})$ with the
algebra $\textsf{M}_{n^r}(\Bbbk)$ of $n^r \times n^r$ matrices, via the
basis, and let
\begin{equation}\label{e:Phi}
\Phi: \Bbbk[W_n] \to \End_\Bbbk(\V^{\otimes r}), \quad \textstyle
\sum_{w\in W_n} a_w\, w \mapsto \sum_{w \in W_n} a_w P(w)^{\otimes r}
\end{equation}
be the corresponding matrix representation. The image $\im(\Phi)$
is the $\Bbbk$-linear span of the set $\Gamma$ of $r$th
Kronecker powers of $n \times n$ permutation matrices.  Our main
result is the following.

\begin{thm}\label{t1}
  The set of all $P(w)^{\otimes r}$ such that $w$ is a permutation in
  $W_n$ and the sequence $(w(1),w(2),\dots,w(n))$ has an increasing
  (resp., decreasing) subsequence of length $n-r$ is a $\Bbbk$-basis
  for $\im(\Phi)$.
\end{thm}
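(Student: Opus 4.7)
The strategy is to (i) compute $\dim \im(\Phi)$ via Schur--Weyl duality and match it to the size of the proposed set using RSK, then (ii) use the cellular structure of $\Bbbk[W_n]$ to identify $\ker(\Phi)$ and deduce linear independence.

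\emph{Step 1: dimension and cardinality match.} By the Schur--Weyl duality for the partition algebra cited in the abstract, $\im(\Phi)$ is semisimple (at least after base change to a splitting field of characteristic zero) and its irreducible summands correspond to those $\lambda \vdash n$ for which the Specht module $S^\lambda$ occurs in $\V^{\otimes r}$. Since $\V \cong \operatorname{Ind}_{W_{n-1}}^{W_n}(\mathbf{1})$, tensoring with $\V$ amounts to removing one box from $\lambda$ and then adding one back, so the partitions reachable from $(n)$ in $r$ such steps are precisely those with $\lambda_1 \geq n-r$. Hence
\[
\dim \im(\Phi) = \sum_{\lambda \vdash n,\ \lambda_1 \geq n-r} (f^\lambda)^2,
\]
and the Robinson--Schensted bijection $w \leftrightarrow (P_w, Q_w)$, for which $\lambda(w)_1 = \mathrm{LIS}(w)$, identifies this sum with $\#\{w \in W_n : \mathrm{LIS}(w) \geq n-r\}$: exactly the size of the proposed basis.

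\emph{Step 2: linear independence via the Murphy basis.} I would invoke Murphy's cellular basis $\{m_{\fs,\ft}\}$ of $\Z[W_n]$, indexed by pairs $(\fs,\ft)$ of same-shape SYT with $|\lambda| = n$, using two standard features: (a) the $\Z$-span $\mathcal{J}$ of those $m_{\fs,\ft}$ having $\lambda_1 < n-r$ is a two-sided ideal; and (b) the change of basis between $\{w\}_{w \in W_n}$ and $\{m_{\fs,\ft}\}$ is unitriangular with respect to RSK together with the dominance order $\dom$ on shapes. Feature (a), together with the branching argument in Step~1, gives $\mathcal{J} \subseteq \ker(\Phi)$, and the dimension count turns this inclusion into an equality. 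In the quotient $\im(\Phi) = \Bbbk[W_n]/\ker(\Phi)$, the surviving Murphy elements $\{\overline{m_{\fs,\ft}} : \lambda_1 \geq n-r\}$ form a $\Bbbk$-basis, and (b) restricted to the surviving shapes yields a unitriangular change of basis between that basis and $\{P(w)^{\otimes r} : \mathrm{LIS}(w) \geq n-r\}$. Working with the integral Murphy basis ensures the argument descends to an arbitrary commutative ring $\Bbbk$.

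\emph{Decreasing version.} Left multiplication by the invertible matrix $P(w_0)^{\otimes r}$, where $w_0$ is the longest element of $W_n$, is an automorphism of $\im(\Phi)$ sending $P(w)^{\otimes r} \mapsto P(w_0 w)^{\otimes r}$. The substitution $w \mapsto w_0 w$ replaces $(w(1),\dots,w(n))$ with $(n+1-w(1),\dots,n+1-w(n))$, interchanging increasing and decreasing subsequences, so the decreasing case follows formally from the increasing one.

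\emph{Main obstacle.} The technical core is establishing the two cellular features above in the form stated---in particular, pinning down $\ker(\Phi)$ as the \emph{full} cellular ideal $\mathcal{J}$ (not a proper subideal) and verifying that the Murphy-basis unitriangularity survives passage to $\im(\Phi)$ and meshes correctly with RSK. Once this bookkeeping is in place, the theorem follows from Step~1 and a single invertibility check.
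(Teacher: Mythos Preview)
Your overall strategy matches the paper's: identify $\ker(\Phi)$ with a cell ideal of $\Bbbk[W_n]$ and then argue by triangularity in the quotient. The gap is in your ``standard feature''~(b). The Murphy cellular basis is triangular against the group basis only with respect to the dominance order on \emph{shapes}; there is no off-the-shelf unitriangularity matching individual $m_{\fs\ft}$ to individual permutations via RSK. Concretely, at $v=1$ the element $y_{\fs\ft}$ is a signed sum of the group elements $d(\fs)\,u\,d(\ft)^{-1}$ for $u\in W_\lambda$; its Bruhat-maximal term is $d(\fs)\,w_\lambda\,d(\ft)^{-1}$, but the lower terms need not have RSK shape $\lessdom\lambda'$, so ``restricting to the surviving shapes'' does not obviously give a triangular system. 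What is needed---and what you are tacitly assuming---is that the leading-term map $(\fs,\ft)\mapsto d(\fs)\,w_\lambda\,d(\ft)^{-1}$ is a bijection onto $\{w:\RSK(w)=\lambda'\}$. That is a statement about Kazhdan--Lusztig cells, not a formal property of cellular algebras. The paper makes this explicit by routing through the KL basis: Geck's Proposition~\ref{p:Geck} places each $y_{\fs\ft}$ in the span of those $C_x$ with $\RSK(x)\lessdom\lambda'$, so (Theorem~\ref{t2}) $\{C_x:\RSK(x)\notdom\alpha(n,r)\}$ is a basis of $\ker(\Phi)$; the Lemma then uses the \emph{Bruhat}-order unitriangularity between $\{C_x\}$ and $\{T_x\}$---which holds for an arbitrary indexing set $U$---to conclude that $\{T_w:\RSK(w)\dom\alpha(n,r)\}$ is a basis of the quotient. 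So the KL detour is not decorative: it is precisely what supplies the per-permutation triangularity your sketch assumes.

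A secondary point: your Step~1 dimension count and the inclusion $\mathcal{J}\subseteq\ker(\Phi)$ are argued semisimply, with an appeal to the integral Murphy basis to descend. The paper avoids this by invoking \cite{BDM:kernel}*{Thm.~7.4}, which gives the Murphy basis of $\ker(\Phi)$ over $\Z$ directly (via identification with the annihilator of $M^{\alpha(n,r)}$), and it explicitly notes that Schur--Weyl duality is not needed for Theorem~\ref{t1}. Your decreasing-case argument via left multiplication by $P(w_0)^{\otimes r}$ (negating values) is correct; the paper uses right multiplication (reversing positions), which works equally well.
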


The result is of interest only for $r< n-1$, as the subsequence
condition is vacuous for $r \ge n-1$ (in which case $\Phi$ is
faithful). In \cite{BDM:canon}, it is shown that the problem of
expressing an arbitrary element of $\im(\Phi)$ as a linear combination
of the basis in Theorem~\ref{t1} reduces to inverting a
$(0,1)$-unitriangular matrix.

In Section \ref{s1} we assume that $\Bbbk = \R$, the field of real
numbers, and consider nonnegative matrices in $\im(\Phi)$ in the
spirit of \cites{Marcus-Minc, Ryser, Minc:permanents, Minc:non-neg,
  Bapat-Raghavan, BR, Brualdi:06}.  Of particular interest is
Birkhoff's theorem \cite{Birkhoff} (see also von Neumann \cite{vN}),
that the set of $n \times n$ doubly stochastic matrices is the convex
hull of the set of $n \times n$ permutation matrices. We wondered
whether Birkhoff's theorem extends to the set $\Omega$ of $n^r \times
n^r$ doubly stochastic matrices in $\im(\Phi)$; indeed, we conjectured
that it does so extend in an earlier version of this paper. The
conjecture is false, by a recent counterexample (Example \ref{ex1})
due to Roberson and Schmidt.  In Section \ref{s1}, we prove:
\begin{enumerate}\renewcommand{\labelenumi}{(\roman{enumi})}
\item $\Omega$ is convex and the points of $\Gamma$ are extremal
  points of $\Omega$.
\item Birkhoff's theorem extends to $\Omega$ if and only if a theorem
  of K\"{o}nig extends to $\Omega$, and it does so extend if $r \ge
  n-1$.
\end{enumerate}
The interesting question of determining the convex structure of
$\Omega$, and in particular finding its set of vertices, is
highlighted.

The rest of the paper takes place over a general unital commutative
ring $\Bbbk$, unless indicated otherwise.  Section \ref{s2} looks at
$r=1$ in detail; the bases in Theorem \ref{t1} appear to be new even
in that case, and we show that they are indexed by the set of
``consecutive'' cycles in $W_n$. Section \ref{s3} contains the proof
of Theorem~\ref{t1}. Although the proof is straightforward it is heavy
on technical notation; in particular we need to work in the
Iwahori--Hecke algebra of $W_n$ in most of that section. Note that
Schur--Weyl duality is not needed to prove Theorem~\ref{t1}. Theorem
\ref{t2} in Section~\ref{s3} obtains a new ``Kazhdan--Lusztig'' basis
for the annihilator of a certain key permutation module, which may be
of independent interest. Section \ref{s4} explains the connections to
integral Schur--Weyl duality for partition algebras, proved in
\cites{BDM,Donkin}, and shows for instance that $\im(\Phi)
= \End_{\Ptn(r,n)}(\V^{\otimes r})$, the centralizer algebra for the
usual action of the partition algebra on tensor space.  Section
\ref{s5} applies Schur--Weyl duality to describe $\im(\Phi)$ by an
explicit linear system, which relates back to doubly stochastic
matrices in case $\Bbbk=\R$.

\section{Convexity and doubly stochastic matrices}\label{s1}
\noindent
Assume that $\Bbbk=\R$ in this section.  Recall that a
square matrix is \emph{doubly stochastic} if its entries are
nonnegative real numbers and all its rows and columns sum to $1$.

\begin{prop}\label{p1}
  If $\Bbbk=\R$, the set $\Omega$ of doubly stochastic matrices in
  $\im(\Phi)$ is convex, and every $P(w)^{\otimes r}$ is an extremal
  point (vertex) of $\Omega$.
\end{prop}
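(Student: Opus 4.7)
The plan is in two parts: convexity of $\Omega$ and extremality of each $P(w)^{\otimes r}$.

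For convexity, I would observe that $\Omega$ is the intersection of two convex sets: the set of all $n^r \times n^r$ doubly stochastic matrices, and $\im(\Phi)$. The first is convex because the defining conditions---nonnegativity of each entry and row/column sums equal to $1$---are all convex (in fact affine or inequality-linear). The second is convex because it is a linear subspace of $\textsf{M}_{n^r}(\R)$. The intersection of two convex sets is convex.

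For the extremality claim, first I would check that $P(w)^{\otimes r}$ actually lies in $\Omega$. It is in $\im(\Phi)$ by definition, and since the Kronecker product of two permutation matrices is a permutation matrix, the matrix $P(w)^{\otimes r}$ is itself an $n^r \times n^r$ permutation matrix; hence it has $0/1$ entries with exactly one $1$ per row and per column, so it is doubly stochastic. Extremality then follows by the standard argument used to show that permutation matrices are vertices of the Birkhoff polytope. Suppose $P(w)^{\otimes r} = tA + (1-t)B$ with $A, B \in \Omega$ and $0 < t < 1$. At every entry $(i,j)$ where $P(w)^{\otimes r}$ is $0$, nonnegativity of the entries of $A$ and $B$ forces $A_{ij} = B_{ij} = 0$. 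At every entry where $P(w)^{\otimes r}$ is $1$, we have $tA_{ij} + (1-t)B_{ij} = 1$, and since every entry of a doubly stochastic matrix lies in $[0,1]$ (each row sums to $1$ with all entries $\ge 0$), this forces $A_{ij} = B_{ij} = 1$. Hence $A = B = P(w)^{\otimes r}$, as required.

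There is no real obstacle here: the only observation of substance is that $P(w)^{\otimes r}$ is a $0/1$ matrix, after which extremality is automatic inside any convex set of doubly stochastic matrices. Note that this proposition makes no use of Theorem \ref{t1}; it only uses that $\Gamma \subseteq \im(\Phi)$, which is clear from the definition of $\Phi$.
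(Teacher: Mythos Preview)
Your proof is correct and follows essentially the same approach as the paper. The only cosmetic differences are that you phrase convexity as ``intersection of two convex sets'' while the paper verifies it directly, and you allow a general $t\in(0,1)$ in the extremality argument where the paper takes the midpoint $t=\tfrac12$; neither affects the substance.
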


\begin{proof}
Let $M,M'$ be in $\Omega$, the set of doubly stochastic elements of
$\im(\Phi)$, the $\R$-span $\Gamma = \{P(w)^{\otimes r} \mid w \in
W_n\}$. Then for any $0 \le t \le 1$, $tM + (1-t)M'$ is again doubly
stochastic, and it is also in the $\R$-span of $\Gamma$, so it is in
$\Omega$. Thus $\Omega$ is convex.

Furthermore, if $M = P(w)^{\otimes r}$ is not an extremal point of
$\Omega$, then it must be the midpoint $M = \frac{1}{2}(A+B)$ of the
line segment between two \emph{distinct} points $A,B$ of
$\Omega$. Hence
\[
m_{i_1\cdots i_r,\, j_1\cdots j_r} = \tfrac{1}{2}(a_{i_1\cdots i_r,\,
  j_1\cdots j_r} + b_{i_1\cdots i_r,\, j_1\cdots j_r})
\]
for all $i_1\cdots i_r,\, j_1\cdots j_r$ in $\nn^r$, where each
\[
0 \le a_{i_1\cdots i_r,\, j_1\cdots j_r}, b_{i_1\cdots i_r,\, j_1\cdots
  j_r} \le 1.
\]
If $m_{i_1\cdots i_r,\, j_1\cdots j_r} = 0$ then $a_{i_1\cdots i_r,\,
  j_1\cdots j_r} = b_{i_1\cdots i_r,\, j_1\cdots j_r} = 0$. If
$m_{i_1\cdots i_r,\, j_1\cdots j_r} = 1$ then $a_{i_1\cdots i_r,\,
  j_1\cdots j_r} = b_{i_1\cdots i_r,\, j_1\cdots j_r} = 1$. Since
$0,1$ are the only possible values of the entries of $M$, we see that
$A=B$, which is the desired contradiction.
\end{proof}

In case $r=1$, Birkhoff's theorem \cite{Birkhoff} characterizes the
set of $n \times n$ doubly stochastic matrices as the convex hull of
the set of $n \times n$ permutation matrices. In light of Proposition
\ref{p1}, it is natural to ask the following question.

\begin{quest}\label{q1}
  Does Birkhoff's result extend to the set $\Omega$, for all $n$, $r$?
  In other words, is $\Omega$ the convex hull of $\Gamma$, for all
  $n,r$?
\end{quest}

Proposition \ref{p3} shows that the answer is yes for all $r$
sufficiently large, but it is no in general, as we will see. Recall
that Birkhoff's theorem is implied by a theorem of K\"{o}nig
\cite{Koenig}, which states that any $n \times n$ doubly stochastic
matrix $M=[m_{ij}]$ has a positive diagonal, where a \emph{diagonal}
is defined to be $\{m_{w(j),j}\}_{j\in [n]}$, for some $w \in W_n$
(the entries corresponding to the nonzero entries in
$P(w)$). Equivalently, K\"{o}nig's result is that the permanent of any
doubly stochastic matrix is positive; recall that the permanent is the
sum of all diagonal products.

\begin{prop}\label{p2}
  Let $\Bbbk=\R$. For any $n,r$ the following are equivalent:
  \begin{enumerate}
  \item The set $\Omega$ of doubly stochastic matrices in $\im(\Phi)$
    is equal to the convex hull of $\Gamma = \{ P(w)^{\otimes r} \mid w
    \in W_n \}$.

  \item Every $M = [m_{i_1\cdots i_r,\, j_1\cdots j_r}]$ in $\Omega$
    has a positive ``Kronecker power'' diagonal; that is, a diagonal,
    with all entries positive, of the form
    \[
    \{ m_{w(j_1)w(j_2) \cdots w(j_n),\, j_1j_2 \cdots j_r} \mid j_1, j_2,
    \dots, j_r = 1, \dots, n \}
    \]
    corresponding to the nonzero entries in $P(w)^{\otimes r}$, for
    some $w \in W_n$.
  \end{enumerate}
\end{prop}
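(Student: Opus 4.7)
The plan is to mimic the classical derivation of Birkhoff's theorem from K\"{o}nig's theorem, exploiting two structural facts: $\im(\Phi)$ is a linear subspace of $\textsf{M}_{n^r}(\R)$, and the positions of the $1$'s in $P(w)^{\otimes r}$ are precisely the index pairs $(w(j_1)\cdots w(j_r),\, j_1\cdots j_r)$ that constitute a Kronecker power diagonal.

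For the implication (1) $\Rightarrow$ (2), given $M \in \Omega$ I would express $M = \sum_{w \in W_n} c_w\, P(w)^{\otimes r}$ as a convex combination and pick any $w_0$ with $c_{w_0} > 0$. Since every $P(w)^{\otimes r}$ has nonnegative entries, the entry of $M$ at each position where $P(w_0)^{\otimes r}$ equals $1$ is at least $c_{w_0} > 0$. These positions form exactly the Kronecker power diagonal for $w_0$, delivering the desired positive diagonal.

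For (2) $\Rightarrow$ (1), I would argue by induction on the number $N(M)$ of nonzero entries of $M \in \Omega$. Using (2), select $w \in W_n$ so that the corresponding Kronecker power diagonal of $M$ consists of positive entries, and let $\epsilon$ be the minimum among them. If $\epsilon = 1$, then the $n^r$ diagonal positions already saturate every row and column sum of $M$, forcing $M = P(w)^{\otimes r} \in \Gamma$. Otherwise $0 < \epsilon < 1$, and I would set
\[
M' := \frac{1}{1-\epsilon}\bigl(M - \epsilon\, P(w)^{\otimes r}\bigr).
\]
A direct verification shows that $M'$ is doubly stochastic (row and column sums equal $(1-\epsilon)/(1-\epsilon) = 1$), has nonnegative entries by the choice of $\epsilon$ (the subtraction only affects the diagonal positions), and lies in $\im(\Phi)$ because $\im(\Phi)$ is an $\R$-linear subspace. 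Moreover $N(M') < N(M)$: the diagonal entry attaining the minimum drops to $0$ while no previously vanishing entry becomes nonzero. By induction $M'$ is a convex combination of $\Gamma$, and hence so is $M = (1-\epsilon)\, M' + \epsilon\, P(w)^{\otimes r}$.

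The main point to verify carefully is that the iterative reduction stays inside $\Omega$: this works precisely because $\im(\Phi)$ is a \emph{linear} subspace (not merely convex), so $M - \epsilon\, P(w)^{\otimes r}$ and its positive rescaling remain in $\im(\Phi)$. The only other subtle step is the termination case $\epsilon = 1$, which relies on the elementary observation that the diagonal indices for a fixed $w$ hit each row and each column of an $n^r \times n^r$ matrix exactly once, so a doubly stochastic matrix placing mass $1$ at each of them must vanish elsewhere.
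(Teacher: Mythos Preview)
Your proof is correct and follows essentially the same approach as the paper's: for (2) $\Rightarrow$ (1) you run the standard Birkhoff--K\"{o}nig reduction, subtracting off $\epsilon\, P(w)^{\otimes r}$ and rescaling to stay in $\Omega$, with termination controlled by the strictly decreasing count of nonzero entries; for (1) $\Rightarrow$ (2) you read off a positive Kronecker power diagonal from any summand with positive coefficient. The only difference is cosmetic---you frame the reduction as an induction on $N(M)$ and make explicit the linearity of $\im(\Phi)$ needed to keep $M'$ inside it, while the paper phrases the same loop as an iterative process.
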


\begin{proof}
This is an extension to higher Kronecker powers of standard arguments,
e.g., \cite{Marcus-Minc}*{II.1.7}, \cite{Bapat-Raghavan}*{Thm.~2.1.4},
or \cite{BR}*{Thm.~1.2.1}.

We first show that (b) implies (a). Assume that (b) holds. Any convex
linear combination of the $P(w)^{\otimes r}$ clearly belongs to
$\Omega$, so we only need to show the reverse inclusion. Let $M$ be in
$\Omega$. By (b), there is a positive Kronecker power diagonal in $M$,
indexed by some $w \in W_n$. Let $c_w$ be the minimum entry in that
diagonal.  If $c_w=1$ then $M=P(w)^{\otimes r}$ and we are
done. Otherwise $0 < c_w < 1$ and the matrix $M' := \frac{1}{1-c_w}(M
- c_w P(w)^{\otimes r})$ is again in $\Omega$. Note that $M'$ has at
least one more zero entry than $M$, and $M = c_w P(w)^{\otimes r} +
(1-c_w) M'$. We then repeat the argument with $M'$ in place of
$M$. The process must terminate in finitely many steps, as the number
of nonzero entries in the sequence of matrices forms a strictly
decreasing sequence. Upon termination, we have found real scalars $c_w
\ge 0$ such that
\[
M = \textstyle \sum_{w \in W_n} c_w P(w)^{\otimes r}
\quad\text{and}\quad \sum_{w \in W_n} c_w = 1
\]
which is a convex linear combination, thus proving the reverse
inclusion and the desired equality.

Conversely, the fact that (a) implies (b) is immediate, as the diagonal
corresponding to any nonzero summand in a convex linear combination of
the $P(w)^{\otimes r}$ must be positive.
\end{proof}

\begin{prop}\label{p3}
  Assume that $\Bbbk=\R$. If $r \ge n-1$ then $\Omega$ is equal to the
  convex hull of the set $\Gamma = \{P(w)^{\otimes r} \mid w \in
  W_n\}$. In other words, the analogue of Birkhoff's theorem holds.
\end{prop}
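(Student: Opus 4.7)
The plan is to bypass Proposition \ref{p2} and establish directly that every doubly stochastic $M \in \im(\Phi)$ is a convex combination of the Kronecker powers $P(w)^{\otimes r}$. The key input is the faithfulness of $\Phi$ when $r \ge n-1$ (noted just after Theorem \ref{t1}): the set $\Gamma$ is then $\R$-linearly independent, so every $M \in \im(\Phi)$ admits a \emph{unique} expression
\[
M = \sum_{w \in W_n} a_w\, P(w)^{\otimes r}, \qquad a_w \in \R.
\]
It therefore suffices to show that $a_w \ge 0$ for all $w$ and $\sum_w a_w = 1$ whenever $M \in \Omega$.

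To read off the individual $a_w$ from $M$, I would fix the column multi-index $\mathbf{j} = (1,2,\dots,n-1,j_n,\dots,j_r)$ (padded arbitrarily when $r > n-1$). For a given $w \in W_n$, take the row multi-index $\mathbf{i} = (w(1),\dots,w(n-1),w(j_n),\dots,w(j_r))$. The entry of $P(w')^{\otimes r}$ at position $(\mathbf{i},\mathbf{j})$ is $\prod_k \delta_{i_k,w'(j_k)}$, which equals $1$ precisely when $w'(k) = w(k)$ for $k = 1,\dots,n-1$. Since this forces $w'(n)$ to be the unique remaining value, namely $w(n)$, only $w' = w$ contributes. Hence $m_{\mathbf{i},\mathbf{j}} = a_w$, and nonnegativity of the entries of the doubly stochastic matrix $M$ gives $a_w \ge 0$.

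For the normalization, observe that each $P(w)^{\otimes r}$ is itself an $n^r \times n^r$ permutation matrix, so each of its row sums equals $1$; consequently every row sum of $M = \sum_w a_w P(w)^{\otimes r}$ equals $\sum_w a_w$. But those row sums also equal $1$ because $M$ is doubly stochastic, so $\sum_w a_w = 1$. This exhibits $M$ as a convex combination of elements of $\Gamma$, giving $\Omega \subseteq \operatorname{conv}(\Gamma)$; the reverse inclusion is immediate, as each $P(w)^{\otimes r}$ is doubly stochastic and $\Omega$ is convex by Proposition \ref{p1}. The only real obstacle is the coefficient-extraction step of the previous paragraph, and this is exactly where $r \ge n-1$ is essential: without it one cannot choose $\mathbf{j}$ so that a single matrix entry of $M$ isolates a single $a_w$, and several terms would lump together in a way that the argument cannot unscramble.
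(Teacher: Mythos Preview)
Your proof is correct and rests on the same core idea as the paper's: use faithfulness of $\Phi$ for $r \ge n-1$ to get unique coefficients, then extract each $a_w$ as a single matrix entry by choosing a column multi-index with $n-1$ distinct values (so that the permutation is pinned down). The only difference is cosmetic: the paper concludes by noting that some $c_w>0$ yields a positive Kronecker-power diagonal and then invokes Proposition~\ref{p2}, whereas you finish more directly by reading off $\sum_w a_w = 1$ from a single row sum.
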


\begin{proof}
By \cite{BDM:kernel}*{Cor.~4.13}, the representation $\Phi$ is
injective for any $r \ge n-1$, hence induces an isomorphism $\R[W_n]
\cong \im(\Phi)$. Thus there is always a unique solution to the
equation
\[
\textstyle \Phi\big(\sum_{w \in W_n} c_w w\big) = \sum_{w \in W_n} c_w
P(w)^{\otimes r} = M
\]
for any given $M$ in $\im(\Phi)$. We have
\[
m_{i_1\cdots i_r,\, j_1\cdots j_r} = \textstyle \sum_{w\in W_n:
  w(j_\alpha)=i_\alpha,\, \forall \alpha} c_w.
\]
If $r \ge n$, only one $P(w)^{\otimes r}$ can contribute to any
$m_{w(j_1)\cdots w(j_r), j_1\cdots j_r}$, where there are exactly $n$
distinct values (the maximum possible) in $\{j_1, \dots, j_r\}$. So we
must take $c_w = m_{w(j_1)\cdots w(j_r), j_1\cdots j_r}$ equal to that
entry of $M$, for each $w$. Thus, if $M$ happens to be doubly
stochastic, then each $c_w \ge 0$. At least one of the $c_w$ is
positive, and the corresponding diagonal is a positive Kronecker power
diagonal in $M$, so by Proposition \ref{p2}, Birkhoff's theorem holds
in this case.

If $r = n-1$ then the same reasoning applies to any $m_{w(j_1)\cdots
  w(j_{n-1}), j_1\cdots j_{n-1}}$, where the values in $\{j_1, \dots,
j_{n-1}\}$ are all distinct (we can take $j_k = k$ here, for
instance). The point is that any permutation of $n$ objects is
determined by its values on $n-1$ of them. So the rest of the argument
goes through as in the preceding paragraph.
\end{proof}

However, in general the answer to Question \ref{q1} is no, as shown in
the following simple counterexample, based on Roberson and Schmidt
\cite{RS}*{Sect.~3}. To set the stage, we observe that $P(w)^{\otimes
  2}$ is the block matrix $[\delta_{i,w(j)} P(w)]$. In other words, it
has a copy of $P(w)$ in each block corresponding to a $1$-entry of
$P(w)$, and all other blocks are zero.  For instance, if $t$ is the
transposition that interchanges $(1,2)$ in $W_4$ then
\[
P(t)^{\otimes 2} =
\begin{bmatrix}
  0 & P(t) & 0 & 0\\
  P(t) & 0 & 0 & 0\\
  0 & 0 & P(t) & 0\\
  0 & 0 & 0 & P(t)
\end{bmatrix}
\]
as a block matrix. Now we are ready for the promised example.

\begin{example}[Roberson--Schmidt]\label{ex1}
Assume that $(n,r) = (4,2)$, and let $M = \sum_{w \in W_4} c_w
P(w)^{\otimes 2}$, where $c_w = 1/5$ for each transposition $w \in
W_4$, $c_1 = -1/5$ is the coefficient of the identity matrix, and $c_w
= 0$ for all other $w \in W_4$. Then $M$ has the block form shown in
Figure~\ref{fig1},
\begin{figure}[ht]
  \caption{Counterexample for $(n,r)=(4,2)$}
\small
\[ 
\left[\begin{array}{rrrr|rrrr|rrrr|rrrr}\label{fig1}
.4 & 0 & 0 & 0 & 0 & .2 & 0 & 0 & 0 & 0 & .2 & 0 & 0 & 0 & 0 & .2 \\
0 & 0 & .2 & .2 & .2 & 0 & 0 & 0 & 0 & .2 & 0 & 0 & 0 & .2 & 0 & 0 \\
0 & .2 & 0 & .2 & 0 & 0 & .2 & 0 & .2 & 0 & 0 & 0 & 0 & 0 & .2 & 0 \\
0 & .2 & .2 & 0 & 0 & 0 & 0 & .2 & 0 & 0 & 0 & .2 & .2 & 0 & 0 & 0 \\
\hline
 0 & .2 & 0 & 0 & 0 & 0 & .2 & .2 & .2 & 0 & 0 & 0 & .2 & 0 & 0 & 0 \\
.2 & 0 & 0 & 0 & 0 & .4 & 0 & 0 & 0 & 0 & .2 & 0 & 0 & 0 & 0 & .2 \\
0 & 0 & .2 & 0 & .2 & 0 & 0 & .2 & 0 & .2 & 0 & 0 & 0 & 0 & .2 & 0 \\
0 & 0 & 0 & .2 & .2 & 0 & .2 & 0 & 0 & 0 & 0 & .2 & 0 & .2 & 0 & 0 \\
\hline
 0 & 0 & .2 & 0 & .2 & 0 & 0 & 0 & 0 & .2 & 0 & .2 & .2 & 0 & 0 & 0 \\
0 & .2 & 0 & 0 & 0 & 0 & .2 & 0 & .2 & 0 & 0 & .2 & 0 & .2 & 0 & 0 \\
.2 & 0 & 0 & 0 & 0 & .2 & 0 & 0 & 0 & 0 & .4 & 0 & 0 & 0 & 0 & .2 \\
0 & 0 & 0 & .2 & 0 & 0 & 0 & .2 & .2 & .2 & 0 & 0 & 0 & 0 & .2 & 0 \\
\hline
 0 & 0 & 0 & .2 & .2 & 0 & 0 & 0 & .2 & 0 & 0 & 0 & 0 & .2 & .2 & 0 \\
0 & .2 & 0 & 0 & 0 & 0 & 0 & .2 & 0 & .2 & 0 & 0 & .2 & 0 & .2 & 0 \\
0 & 0 & .2 & 0 & 0 & 0 & .2 & 0 & 0 & 0 & 0 & .2 & .2 & .2 & 0 & 0 \\
.2 & 0 & 0 & 0 & 0 & .2 & 0 & 0 & 0 & 0 & .2 & 0 & 0 & 0 & 0 & .4
\end{array}\right]
\]\normalsize
\end{figure}
with its rows and columns indexed by $[4] \times [4]$ ordered
lexicographically. This matrix is doubly stochastic.  We claim that it
contains no positive Kronecker power diagonal, and thus by Proposition
\ref{p2} does not lie in the convex hull of $\Gamma$. Notice that
\[
M_{i,j} = .2 P(t_{i,j}) \qquad\text{for all } i \ne j
\]
where $t_{i,j}$ is the transposition interchanging $i$, $j$. Each of
these blocks has a unique positive diagonal. By the observation
preceding this example, if $M$ had a positive Kronecker power
diagonal, it would be of the form $P(t)^{\otimes 2}$, for some
transposition $t$. But none of the blocks $M_{i,i}$ on the main block
diagonal contains a positive diagonal corresponding to any
transposition, so the claim is established.
\end{example}

We note that Proposition \ref{p1} implies that the analogue of
Birkhoff's theorem holds for a given $r$ if and only if the
$P(w)^{\otimes r}$, $w \in W_n$, are the \emph{only} vertices
of the convex region $\Omega$. The existence of Example \ref{ex1}
suggests the following interesting problem.

\begin{quest}
  Determine the vertex set of the convex polytope $\Omega$.
\end{quest}

By Proposition \ref{p1}, all the points in $\Gamma$ are vertices of
$\Omega$, but Example \ref{ex1} shows that there can be others. In
fact, for $(n,r)=(4,2)$, calculations by Roberson and Schmidt reveal
that $\Omega$ has 162 vertices, far more than the 24 in $\Gamma$.

\section{Interpretation of the main result in case $r=1$}\label{s2}
\noindent
We work over $\Bbbk$ (an arbitrary unital commutative ring) from now
on, unless explicitly stated otherwise. Theorem \ref{t1} gives two
bases of $\im(\Phi)$ which appear to be new, even for $r=1$. We
wish to explore this case in detail, as it turns out that the set of
permutations in question has interesting structure.

We need to understand the set of $w \in W_n$ having an increasing
subsequence of length $n-1$. It is easy to list all such $w$ by a
combinatorial process of filling in $n$ slots. We will use the
shorthand notation $w_1w_2 \cdots w_n$ for the sequence $(w(1), w(2),
\dots, w(n))$ for $w \in W_n$.  To construct a permutation $w_1w_2
\cdots w_n$ on the list, that is, one having an increasing subsequence
of length $n-1$, pick a number $k \in \nn$ and a slot $j \in \nn$, and
place $k$ in the $j$th slot. The remaining elements, i.e., those in
$\nn \setminus \{k\}$, are placed in the remaining slots in
increasing order. As there are $n$ choices for the number and $n$
choices for its slot, there are $n^2$ items in the list.

\begin{example}\label{ex2}
  If $n=4$, carrying out the above procedure yields the following grid
  of sequences, in the shorthand notation:
\[
\begin{array}{llll}
  \ul{1}234 & 2\ul{1}34 & 23\ul{1}4 & 234\ul{1} \\
  \ul{2}134 & 1\ul{2}34 & 13\ul{2}4 & 134\ul{2} \\
  \ul{3}124 & 1\ul{3}24 & 12\ul{3}4 & 124\ul{3} \\
  \ul{4}123 & 1\ul{4}23 & 12\ul{4}3 & 123\ul{4}
\end{array}
\]
in which we have underlined the number placed in the chosen slot.
\end{example}

Notice that the identity permutation appears $n$ times on the main
diagonal, and the $n-1$ elements on the superdiagonal are the same as
the corresponding ones on the subdiagonal. So our list overcounts by
$2(n-1)$ items. Omitting the duplicates, we obtain a list of $n^2 - 2n
+ 2$ permutations, which is the (well known) dimension of $\im(\Phi)$
in the $r=1$ case.

The structure of this set of permutations is revealed by writing the
permutations not as sequences, but instead as products of disjoint
cycles.

\begin{example}\label{ex3}
The corresponding elements in Example \ref{ex2} written in the
\emph{cycle notation} (e.g., $(4,3,2)$ means $4 \mapsto 3 \mapsto 2
\mapsto 4$) are:
\[
\begin{array}{cccc}
  (1) & (1,2) & (1,2,3) & (1,2,3,4) \\
  (2,1) & (1) & (2,3) & (2,3,4) \\
  (3,2,1) & (3,2) & (1) & (3,4) \\
  (4,3,2,1) & (4,3,2) & (4,3) & (1)
\end{array}
\]
where we write $(1)$ for the identity permutation.
\end{example}

Observe that every element consists of a single cycle of consecutive
numbers, and all such cycles appear. Elements along the diagonals have
the same cycle length, the cycle length increasing by one each step as
the diagonal distance increases away from the main diagonal. Cycles
which are in symmetric positions about the diagonal are mutual
inverses. Finally, we observe that the picture is compatible with
restriction, because we obtain the grid for $n-1$ by deleting the last
row and column of the grid for $n$.

Let $w_0$ be the longest element (with respect to the usual Coxeter
length function) in $W_n$. Note that $w_0$, as a sequence, is the reverse of
the identity sequence (it swaps $(1,n)$, $(2,n-1)$, etc).

We define a \emph{consecutive} $k$-cycle to be either a $k$-cycle
which maps each integer in the interval $[i,i+k-1]$ to its successor
modulo $k$, or its inverse. Notice that all the cycles in Example
\ref{ex3} are consecutive.

\begin{prop}\label{p4}
  The set of $w \in W_n$ having an increasing subsequence of length
  $n-1$ is the same as the set $\CC(n)$ of all consecutive cycles in
  $W_n$, and thus $\{P(w) \mid w \in \CC(n)\}$, $\{P(ww_0) \mid w \in
  \CC(n)\}$ are the bases in Theorem \ref{t1} in case $r=1$.
\end{prop}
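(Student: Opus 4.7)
The plan is to show directly that the two sets coincide by analyzing the parameterization already built in Example~\ref{ex2}, and then to invoke Theorem~\ref{t1}. First I would recall the combinatorial fact noted in the text preceding the proposition: a permutation $w \in W_n$ has an increasing subsequence of length $n-1$ if and only if it arises by choosing a value $k \in \nn$ and a slot $j \in \nn$, placing $k$ in slot $j$, and filling the remaining $n-1$ slots with the elements of $\nn \setminus \{k\}$ in increasing order. This gives a surjection from the $n \times n$ grid of $(k,j)$ pairs onto the set under consideration.

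The main step is a case analysis identifying the resulting $w$ in cycle notation. If $k=j$, then $w$ is the identity, which is a consecutive $1$-cycle. If $k < j$, then positions $1,\dots,k-1$ and $j+1,\dots,n$ are fixed by $w$, while $w(i) = i+1$ for $k \le i \le j-1$ and $w(j) = k$; hence $w$ equals the forward consecutive cycle $(k,k+1,\dots,j)$. If $k > j$, a symmetric computation shows that $w$ fixes $1,\dots,j-1$ and $k+1,\dots,n$, with $w(j) = k$ and $w(i) = i-1$ for $j+1 \le i \le k$, so $w$ is the inverse of the consecutive cycle $(j,j+1,\dots,k)$. This verifies that every $w$ coming from the grid lies in $\CC(n)$.

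For the reverse inclusion I would observe that every consecutive cycle arises from some grid entry: the forward cycle $(i,i+1,\dots,i+\ell-1)$ comes from $(k,j) = (i, i+\ell-1)$ and its inverse from $(k,j) = (i+\ell-1, i)$, while the identity comes from any diagonal pair $k=j$. Combining the two inclusions yields the claimed equality $\{w \in W_n : w \text{ has an increasing subsequence of length } n-1\} = \CC(n)$. The basis statement then follows from Theorem~\ref{t1} with $r=1$, using that right multiplication by the longest element $w_0$ reverses the one-line notation and therefore restricts to a bijection between the permutations having an increasing subsequence of length $n-1$ and those having a decreasing subsequence of length $n-1$.

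No serious obstacle is expected; the argument is essentially bookkeeping of the case analysis together with a verification that the parameterization map $(k,j) \mapsto w$ hits every element of $\CC(n)$.
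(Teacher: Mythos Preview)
Your proof is correct. The paper argues the first claim by induction on $n$: assuming the equality for $W_{n-1}$ (which corresponds to the upper-left $(n-1)\times(n-1)$ subgrid, as noted just before the proposition), it checks that the $2n-3$ new consecutive cycles moving $n$ match the non-diagonal entries in the last row and column of the grid. Your direct case split on the sign of $k-j$ accomplishes the same identification for all grid entries at once, without induction. The two arguments are closely related---the paper's inductive step is essentially your case analysis restricted to pairs $(k,j)$ with $k=n$ or $j=n$---but yours is more explicit and self-contained, while the paper's is marginally more concise by exploiting the recursive structure of the grid. The treatment of the second basis via $w \mapsto ww_0$ is the same in both.
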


\begin{proof}
The first claim is proved by induction on $n$. Assuming the desired
equality in the statement has been established for $n-1$, one easily
checks that the additional $2n-3$ consecutive cycles which move $n$
coincide with the non-diagonal elements in the last row and column of
the grid, which shows that the equality holds when $n-1$ is replaced
by $n$.

Hence $\{P(w) \mid w \in \CC(n)\}$ is the first basis in
Theorem~\ref{t1}.  The second basis is obtained by reversing the order
of each sequence in the grid, which obviously interchanges increasing
and decreasing subsequences. Let $\rev{w}$ denote the reverse of $w
\in W_n$.  Then $\rev{w}= ww_0$.  It follows that $\{P(ww_0) \mid w
\in \CC(n)\}$ is the second basis in Theorem~\ref{t1}.
\end{proof}

The two bases in Proposition \ref{p4} appear to be new. Compare
e.g.\ with the bases in
\cites{Farahat-Mirsky,Farahat,Gibson,Johnsen,Lai,Brualdi-Cao}.  The
first basis in Proposition \ref{p4} has something of the same flavor
as that in \cite{Farahat-Mirsky}*{Cor.~1}, which is also indexed by a
(different) set of cycles and is also compatible with restriction.

\section{Proof of Theorem \ref{t1}}\label{s3}\noindent
The main task of the proof is to rewrite the basis of $\ker(\Phi)$
given in \cite{BDM:kernel}*{Thm.~7.4} (which is written in terms of
certain Murphy basis elements at $v=1$) in terms of the
Kazhdan--Lusztig basis.  Our main technical tool is the paper of Geck
\cite{Geck}, which works out the relation between the two approaches
in the context of the Iwahori--Hecke algebra $\Hec$ associated to the
symmetric group. One may wish to compare our proof with the proof of
\cite{RSS}*{Thm.~1}, which is also based on Geck's paper, although
both the results and proofs are different. 

Let $S$ be the set of adjacent transpositions in $W_n$ and write $W =
W_n$ in this section. Let $l:W \to \Z_{\ge 0}$ be the usual %Coxeter
length function with respect to $S$. The Iwahori--Hecke algebra $\Hec
= \Hec(W,S)$ is the $\LP$-algebra ($v$ an indeterminate) with basis
$\{T_w \mid w \in W\}$ (where $T_1 = 1$) and with multiplication given
by
\[
T_s T_w =
\begin{cases}
  T_{sw} & \text{ if } l(sw) = l(w)+1,\\
  T_{sw}+(v-v^{-1}) T_w & \text{ if } l(sw) = l(w)-1
\end{cases}
\]
for all $w \in W$, $s \in S$.

\begin{rmk}\label{r1}
We follow the notational conventions of \cites{Lu,Geck} here; in
particular the generators $T_s$ satisfy the ``balanced'' quadratic
eigenvalue relation $(T_s+v^{-1})(T_s-v) = 0$. To get back to the
setup in the older articles \cites{Murphy:92,Murphy:95,KL} one needs
to set $q = v^2$ and replace $T_s$ by $v T_s$ (which defines an
isomorphism between the two versions).
\end{rmk}

There is a unique ring involution $\LP \to \LP$, written $a \mapsto
\ov{a}$, such that $\ov{v}=v^{-1}$. This extends to a ring involution
$\jmath: \Hec \to \Hec$ such that
\begin{equation}\label{e:jmap}
\textstyle \jmath\left(\sum_{w \in W} a_w T_w\right) = \sum_{w\in W}
(-1)^{l(w)} \ov{a}_w T_w
\end{equation}
for any $a_w \in \LP$. There is also a unique $\LP$-algebra
automorphism
\begin{equation}
\dagger: \Hec \to \Hec \text{ such that } T_s \mapsto T_s^\dagger =
-T_s^{-1} \quad (s \in S).
\end{equation}
We have $T_w^\dagger = (-1)^{l(w)} T_{w^{-1}}^{-1}$, for any $w \in
W$.  The maps $\jmath, \dagger$ commute. Define a map
$\ov{\phantom{a}}: \Hec \to \Hec$, $h \mapsto \ov{h}$, where $\ov{h} =
\jmath(h^\dagger) = \jmath(h)^\dagger$. The map $\ov{\phantom{a}}$ is a
ring involution of $\Hec$ such that
\begin{equation}
\textstyle \ov{\sum_{w \in W} a_wT_w} = \sum_{w \in W} \ov{a}_w
T_{w^{-1}}^{-1} \quad (a_w \in \LP).
\end{equation}
By \cite{Lu}*{Thm.~5.2}, for any $w \in W$, there exist unique $C_w$,
$C'_w$ in $\Hec$ such that
\begin{equation}
\begin{aligned}
  \ov{C_w} &= C_w \quad\text{and}\quad C_w \equiv T_w \mod{\Hec_{>0}}\\
  \ov{C'_w} &= C'_w \quad\text{and}\quad C'_w \equiv T_w \mod{\Hec_{<0}}
\end{aligned}
\end{equation}
where
\[
  \Hec_{>0} := \textstyle \sum_{w \in W} v\Z[v] T_w, \qquad
  \Hec_{<0} := \textstyle \sum_{w \in W} v^{-1}\Z[v^{-1}]T_w .
\]
Then $\{C_w \mid w \in W\}$, $\{C'_w \mid w \in W\}$ are both bases of
$\Hec$. These are the ``Kazhdan--Lusztig bases'' first introduced in
\cite{KL}. It was proved in \cite{KL}*{Thm.~1.1} that
\begin{equation}\label{eq:KL}
C_w = T_w + \sum_{y<w} (-1)^{l(w)+l(y)} \ov{p_{y,w}} \, T_y, \qquad
C'_w = T_w + \sum_{y<w} p_{y,w} \, T_y
\end{equation}
for any $w \in W$, where both sums are over the set of $y \in W$ such
that $y<w$ in the Bruhat--Chevalley order on $W=W_n$ and $p_{y,w}$ is
in $v^{-1}\Z[v^{-1}]$. It follows that $C_w = (-1)^{l(w)}
\jmath(C'_w)$.

Now we recall Murphy's bases.  As usual, we write $\lambda \vdash n$
to indicate that $\lambda = (\lambda_1, \lambda_2, \dots, \lambda_k)$
is a partition of $n$ (meaning that $\lambda \in \Z^k$, $\lambda_1 \ge
\lambda_2 \ge \cdots \ge \lambda_k >0$, and $\sum \lambda_i = n$).  If
$\lambda \vdash n$, set
\[
\Tab(\lambda) = \{ \text{standard Young tableaux of shape } \lambda \}
\]
where as usual the numbers in a standard tableau are strictly
increasing along the rows and down the columns. (See \cite{Fulton} for
details.) In \cite{Murphy:92} (see also \cite{Murphy:95}) Murphy
introduces two bases
\[
\{x_{\fs\ft} \mid \fs,\ft \in \Tab(\lambda), \lambda \vdash n\},\quad
\{y_{\fs\ft} \mid \fs,\ft \in \Tab(\lambda), \lambda \vdash n\}
\]
of $\Hec$, indexed by pairs of standard Young tableaux of the same
shape. For $\lambda \vdash n$ and $\fs,\ft \in \Tab(\lambda)$,
\begin{equation}
\begin{aligned}
  x_\lambda &:= \textstyle\sum_{w \in W_\lambda} v^{l(w)} T_w
  \quad\text{and}\quad x_{\fs\ft} := T_{d(\fs)} x_\lambda
  T_{d(\ft)^{-1}} \\ y_\lambda &:= \textstyle\sum_{w \in W_\lambda}
  (-v)^{-l(w)} T_w \quad\text{and}\quad y_{\fs\ft} := T_{d(\fs)}
  y_\lambda T_{d(\ft)^{-1}}
\end{aligned}
\end{equation}
where $W_\lambda$ is the usual Young subgroup associated to $\lambda$
and $d(\ft) := y$, for a tableau $\ft$, if $y \in W$ is the unique
element of $W$ such that $y \ft = \ft^\lambda$. Here $\ft^\lambda$ is
the tableau in which the the numbers $1, \dots, n$ appear in their
natural order, written as in order across rows from the top row to the
bottom one. Notice that $W_\lambda$ is the row-stabilizer of
$\ft^\lambda$.

\begin{rmk}\label{r2}
The notation here differs slightly from Murphy's in
\cites{Murphy:92,Murphy:95}. Because of renormalization (see Remark
\ref{r1}) what he writes as $T_w$ corresponds to $v^{l(w)}T_w$ in our
notation. Also, the order of the products defining $x_{\fs\ft}$,
$y_{\fs\ft}$ is reversed here, as we deal with left modules while he
works with right ones. Our conventions are chosen to agree with those
in Geck's paper \cite{Geck}.
\end{rmk}

Recall \cites{Stanley,Fulton} or \cite{Knuth}*{\S5.1.4} that the
Robinson--Schensted--Knuth (RSK for short) correspondence gives a
bijection
\[
\bigsqcup_{\lambda \vdash n} \big(\Tab(\lambda) \times
\Tab(\lambda)\big) \to W
\]
mapping pairs of standard tableaux of the same shape to permutations.
Write $\pi_\lambda(\fs,\ft)$ for the image of a pair $(\fs,\ft)$ of
standard tableaux of shape $\lambda$. Given $w \in W$, the pair
$(\fs,\ft)$ such that $\pi_\lambda(\fs,\ft) = w$ is explicitly
constructed by the insertion algorithm \cites{Fulton,Stanley,Knuth}:
$\fs$ is obtained by inserting the numbers in the sequence $(w(1),
w(2), \dots, w(n))$ into an initially empty tableau, and $\ft$ records
the order in which the positions of $\fs$ were filled.

For any $\fs,\ft \in \Tab(\lambda)$, any $\lambda \vdash n$, the ring
involution $\jmath: \Hec \to \Hec$ defined in \eqref{e:jmap} satisfies
\begin{equation}
  \jmath(x_\lambda)=y_\lambda \text{ and thus }
  \jmath(x_{\fs\ft})= \pm y_{\fs\ft} .
\end{equation}
Hence by \cite{Geck}*{Cor.~4.3}, it follows that Geck's element
$\tilde{y}_{\fs\ft}:= T_{d(\fs)} C_{w_\lambda} T_{d(\ft)^{-1}}$
satisfies the identity
\begin{equation}\label{e10}
  \tilde{y}_{\fs\ft} = \pm v^{l(w_\lambda)} y_{\fs\ft}.
\end{equation}
The element $w_\lambda = \pi_{\lambda'}(\ft^{\lambda'},
\ft^{\lambda'})$ here is the longest word in $W_\lambda$, where
\[
\text{$\lambda'$ denotes the transpose partition of $\lambda$.}
\]
By \cite{Geck}*{Cor.~5.6}, the two-sided Kazhdan--Lusztig cell
$\cell(\lambda)$ indexed by any $\lambda \vdash n$ is given by
\begin{equation}
  \cell(\lambda) = \{\pi_{\lambda'}(\fs,\ft) \mid \fs,\ft \in
  \Tab(\lambda') \}.
\end{equation}
For any $w \in W$, Geck writes $\lambda_w = \lambda \iff w \in
\cell(\lambda)$. We prefer instead to label cells by their
\emph{RSK-shape}, written $\RSK(w)$, which we define to be the common
shape of the associated pair of tableaux in the
RSK-correspondence. Thus
\begin{equation}
  \lambda_w = \lambda \iff \RSK(w) = \lambda'; \quad\text{i.e., }
  \RSK(w) = \lambda'_w .
\end{equation}
Recall the \emph{dominance order} $\dom$ on partitions, a partial
order, defined by
\[
  \lambda = (\lambda_1, \dots, \lambda_m) \dom \mu = (\mu_1, \dots,
  \mu_l)
\]
if $\sum_{i\le k} \lambda_i \ge \sum_{i\le k} \mu_i$ for all
$k$. Write $\lambda \lessdom \mu$ if and only if $\mu \dom \lambda$,
and $\lambda \slessdom \mu$ if and only if $\lambda \lessdom \mu$ but
$\lambda\ne \mu$, etc. Recall that transposition reverses the
dominance order: $\lambda \dom \mu \iff \lambda' \lessdom \mu'$.
Reformulating the statement of \cite{Geck}*{Cor.~4.11} in light of
\cite{Geck}*{Cor.~5.11} in these terms gives the following.

\begin{prop}[Geck]\label{p:Geck}
  Let $\lambda \vdash n$. For any $\fs,\ft$ in $\Tab(\lambda)$, there
  exists a unique element $w \in \cell(\lambda)$ of RSK-shape
  $\lambda'$, such that
  \[
  \tilde{y}_{\fs\ft} = C_w \quad + \sum_{x :\, \RSK(x)=\lambda'} a_xC_x
  \quad + \sum_{x :\, \RSK(x)\slessdom \lambda'} b_xC_x
  \]
  where $a_x \in v\Z[v]$, $b_x \in \LP$ for all $x$.
\end{prop}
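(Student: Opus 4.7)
The plan is to derive the proposition directly from the two Geck results cited in the statement, with the bulk of the argument being a translation of indexing conventions between Geck's notation and ours.

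First I would invoke Geck's Cor.~4.11 applied to $\tilde{y}_{\fs\ft} = T_{d(\fs)} C_{w_\lambda} T_{d(\ft)^{-1}}$, which provides a Kazhdan--Lusztig expansion of the form
\[
\tilde{y}_{\fs\ft} = C_w + \sum_{x:\, \lambda_x = \lambda} a_x C_x + \sum_{x:\, \lambda_x \sdom \lambda} b_x C_x,
\]
for a unique element $w \in \cell(\lambda)$ determined by $(\fs,\ft)$, with $a_x \in v\Z[v]$ and $b_x \in \LP$. The uniqueness of $w$ is built into Geck's statement and is ultimately a consequence of the linear independence of the Kazhdan--Lusztig basis.

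Next I would apply the identification $\lambda_x = \mu \iff \RSK(x) = \mu'$ displayed in the excerpt, which comes from Geck's Cor.~5.6/5.11 together with $\cell(\lambda) = \{ \pi_{\lambda'}(\fs,\ft) \}$. This rewrites ``$w \in \cell(\lambda)$'' as ``$\RSK(w) = \lambda'$'' and rewrites the middle sum's index set ``$\lambda_x = \lambda$'' as ``$\RSK(x) = \lambda'$''. For the strict inequality in the third sum I would use that transposition reverses dominance (noted just above the proposition): $\lambda_x \sdom \lambda$ is equivalent to $\lambda_x' = \RSK(x) \slessdom \lambda'$. Combining these three translations converts Geck's formula into the identity asserted in the proposition.

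The main obstacle is purely bookkeeping: one must verify that the inequality in Geck's Cor.~4.11 really points in the direction $\lambda_x \sdom \lambda$ (and not the reverse), so that after the order-reversing transposition one obtains $\RSK(x) \slessdom \lambda'$ rather than $\sdom \lambda'$. Once this sign check is completed, no further mathematical content is required beyond the indexing relabelling.
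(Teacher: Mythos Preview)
Your proposal is correct and matches the paper's own treatment: the paper does not give a separate proof but simply introduces the proposition as ``reformulating the statement of \cite{Geck}*{Cor.~4.11} in light of \cite{Geck}*{Cor.~5.11} in these terms,'' which is exactly the translation you describe. Your explicit identification of the bookkeeping step (checking that $\lambda_x \sdom \lambda$ transposes to $\RSK(x) \slessdom \lambda'$) is a useful clarification of what that reformulation entails.
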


Recall what it means to ``specialize $v \mapsto \xi$ in $\Bbbk$''. If
$\xi \in \Bbbk$ is invertible, we regard $\Bbbk$ as a $\LP$-algebra by
means of the (unique) ring homomorphism $\LP \to \Bbbk$ sending
$v^{\pm 1} \mapsto \xi^{\pm 1}$. Let $\Hec_\Bbbk := \Bbbk
\otimes_{\LP} \Hec$ be the $\Bbbk$-algebra obtained by extending
scalars via this morphism. By abuse of notation, we identify symbols
such as $T_w$, $C_w$, $y_{\fs\ft}$, etc with their respective images
$1\otimes T_w$, $1\otimes C_w$, $1\otimes y_{\fs\ft}$, etc in
$\Hec_\Bbbk$.  As in Dipper and James \cite{DJ}, the left ideal
$M^\lambda := \Hec x_\lambda$ is a ``permutation module'' indexed by
$\lambda \vdash n$. If we specialize $v \mapsto 1$ in $\Bbbk$ then
$\Hec_\Bbbk \cong \Bbbk[W_n]$ and $M^\lambda$ is isomorphic to the
usual permutation module for $\Bbbk[W_n]$.

At this point, there are two possibilities for how to proceed,
depending on whether we prefer to specialize now or later. Rather than
favor one over the other, we discuss both. 

\begin{thm}\label{t2}
Suppose that $r<n-1$. Let $\alpha(n,r) := (n-r,1^r)$ be the partition
$(n-r,1,\dots, 1)$ with $1$ repeated $r$ times.
\begin{enumerate}
\item Under specialization $v \mapsto 1$ in $\Bbbk$, the set $\{ C_x
  \mid \RSK(x) \notdom \alpha(n,r) \}$ is a $\Bbbk$-basis of the
  annihilator of the $\Bbbk[W_n]$-action on $\V^{\otimes r}$.

\item Over $\LP$, the set $\{ C_x \mid \RSK(x) \notdom \alpha(n,r) \}$
  is a $\Z[v,v^{-1}]$-basis of the annihilator of the $\Hec$-action
  on $M^{\alpha(n,r)}$.
\end{enumerate}
\end{thm}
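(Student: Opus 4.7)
The plan is to prove (b) first by translating the Murphy-basis description of $\mathrm{Ann}_\Hec(M^{\alpha(n,r)})$ into the Kazhdan--Lusztig basis using Proposition \ref{p:Geck}, and then to deduce (a) from (b) by base change $v\mapsto 1$. The opposite order (specialize first, starting directly from \cite{BDM:kernel}*{Thm.~7.4}) works equally well, but the $\LP$-version is cleaner because Geck's triangularity lives naturally over $\LP$.

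For (b), first I would establish that $\mathrm{Ann}_\Hec(M^{\alpha(n,r)})$ has a Murphy $y$-basis of the form $\{y_{\fs\ft} : \fs,\ft \in \Tab(\mu),\ \mu' \notdom \alpha(n,r)\}$. This is a standard cellular-algebra fact: the $y$-basis is cellular for $\Hec$, with cells indexed by partitions under the reversed dominance order (because of the sign-twist $\jmath$ relating it to the $x$-basis), and the annihilator of the cell module $M^{\alpha(n,r)}$ is spanned by cell basis elements for shapes $\mu$ with $\mu' \notdom \alpha(n,r)$; at $v\mapsto 1$ this recovers \cite{BDM:kernel}*{Thm.~7.4}. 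Next, for each such $(\fs,\ft)$ of shape $\mu$, I would rescale by the unit $\pm v^{l(w_\mu)} \in \LP$ via \eqref{e10} to pass from $y_{\fs\ft}$ to $\tilde{y}_{\fs\ft}$, and apply Proposition \ref{p:Geck} to expand $\tilde{y}_{\fs\ft} = C_{w(\fs,\ft)} + \sum_{\RSK(x)=\mu'} a_x C_x + \sum_{\RSK(x)\slessdom\mu'} b_x C_x$, with $w(\fs,\ft) \in \cell(\mu)$ of RSK-shape $\mu'$. Reindexing $\nu = \mu'$, this yields that $\mathrm{Ann}_\Hec(M^{\alpha(n,r)})$ is contained in the $\LP$-span of $\{C_x : \RSK(x) \notdom \alpha(n,r)\}$. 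For the reverse inclusion I would match $\LP$-ranks: both $\LP$-modules are free, and both have rank $\sum_{\nu \notdom \alpha(n,r)} |\Tab(\nu)|^2$ (using $|\Tab(\mu)|=|\Tab(\mu')|$ and the RSK bijection), so the inclusion is an equality.

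For (a), I would specialize $v \mapsto 1$. Since $\Hec$ and $M^{\alpha(n,r)}$ are free over $\LP$ and the $C$-basis specializes to a $\Bbbk$-basis of $\Hec_\Bbbk \cong \Bbbk[W_n]$, the conclusion of (b) specializes to: $\{C_x : \RSK(x) \notdom \alpha(n,r)\}$ is a $\Bbbk$-basis of $\mathrm{Ann}_{\Bbbk[W_n]}(M^{\alpha(n,r)}|_{v=1})$. It remains to identify this annihilator with $\ker(\Phi) = \mathrm{Ann}_{\Bbbk[W_n]}(\V^{\otimes r})$. This follows from the $W_n$-orbit decomposition $\V^{\otimes r} \cong \bigoplus_{s=0}^{r} S(r,s)\cdot \Bbbk[W_n/W_{n-s}]$ (with $S(r,s)$ Stirling numbers): $M^{\alpha(n,r)}|_{v=1} \cong \Bbbk[W_n/W_{n-r}]$ appears as a summand, and each other summand $\Bbbk[W_n/W_{n-s}]$ for $s\le r$ is a quotient of it via the inclusion $W_{n-r} \subseteq W_{n-s}$, so the two modules share the same annihilator.

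The main obstacle I foresee is pinning down, with the correct conventions, the Murphy-basis description of $\mathrm{Ann}_\Hec(M^{\alpha(n,r)})$ over $\LP$ with the shape condition $\mu' \notdom \alpha(n,r)$. This requires unwinding the transpose-dominance reversals inherent in the $y$-basis (vs.\ the $x$-basis) and matching them to \cite{BDM:kernel}*{Thm.~7.4}. Once the indexing is correct, Proposition \ref{p:Geck}, the rank count, and the base change to (a) are essentially bookkeeping.
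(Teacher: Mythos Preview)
Your approach is essentially the paper's: start from the Murphy $y$-basis description of the annihilator (the paper cites \cite{BDM:kernel}*{Thm.~7.4} for (a) and \cite{Donkin}*{\S8} for the $\Hec$-version in (b), rather than invoking cellularity abstractly), rescale to $\tilde y_{\fs\ft}$ via \eqref{e10}, and apply Proposition~\ref{p:Geck} together with the implication $\mu'\notdom\alpha(n,r),\ \RSK(x)\lessdom\mu'\Rightarrow\RSK(x)\notdom\alpha(n,r)$. The paper proves (a) and (b) in parallel; you prove (b) and specialise, which the paper itself notes in Remark~\ref{r3} is legitimate, and your identification of $\mathrm{Ann}(M^{\alpha(n,r)}|_{v=1})$ with $\ker\Phi$ via the orbit decomposition of $\V^{\otimes r}$ is precisely \cite{BDM:kernel}*{Thm.~7.4(c)}.

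There is one genuine slip. Over $\LP$, which is not a field, an inclusion of free modules of equal rank need not be an equality (think $2\Z\subset\Z$), so ``match $\LP$-ranks'' does not by itself deliver the reverse inclusion in (b). The repair is already in the ingredients you quote: Proposition~\ref{p:Geck} gives the transition from $\{\tilde y_{\fs\ft}:\mu'\notdom\alpha(n,r)\}$ to $\{C_x:\RSK(x)\notdom\alpha(n,r)\}$ as block lower-triangular in the dominance filtration, with each diagonal block unipotent (leading coefficient $1$ on $C_{w(\fs,\ft)}$, remaining within-cell coefficients in $v\Z[v]$). Such a matrix is invertible over $\LP$, so the two spans coincide and the rank comparison becomes superfluous. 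Make this explicit in place of the rank count and the argument is complete.
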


\begin{proof}
(a)
Let $\Delta := \{ a \in \Bbbk[W_n] \mid a \text{ annihilates } \V^{\otimes
  r}\}$. By \cite{BDM:kernel}*{Thm.~7.4}, the set
\[
\{ y_{\fs\ft} \mid \fs,\ft \in \Tab(\lambda),\ \lambda \notlessdom
\alpha(n,r)' \} = \{ y_{\fs\ft} \mid \fs,\ft \in
\Tab(\lambda),\ \lambda' \notdom \alpha(n,r) \}
\]
is a $\Bbbk$-basis of $\Delta$. Let
\[
a  \quad = \sum_{\fs,\ft \in \Tab(\lambda):\, \lambda' \notdom
\alpha(n,r)} a_{\fs\ft} y_{\fs\ft} \quad (a_{\fs\ft} \in \Bbbk)
\]
be an arbitrary element of $\Delta$.  As we are working at $v=1$, we
have $y_{\fs\ft} = \pm\tilde{y}_{\fs\ft}$ by equation \eqref{e10}. By
Proposition \ref{p:Geck}, each $y_{\fs\ft}$ appearing on the right
hand side of the above equality belongs to the $\Bbbk$-span of $\{C_x
\mid \RSK(x) \lessdom \lambda'\}$, for some $\lambda' \notdom
\alpha(n,r)$.  But
\[
\lambda' \notdom \alpha(n,r) \text{ and } \RSK(x) \lessdom \lambda'
\implies \RSK(x) \notdom \alpha(n,r),
\]
so $a$ is in the span of $\{ C_x \mid \RSK(x) \notdom \alpha(n,r) \}$.
So that set spans $\Delta$. Linear independence is clear, so it is a
basis. This proves (a).

(b) Now let $\Delta:= \{ a \in \Hec \mid a \text{ annihilates }
M^{\alpha(n,r)} \}$.  By \cite{Donkin}*{\S8}, which extends the main
result of \cite{BDM:kernel} to $\Hec$, essentially the same set
\[
\{ y_{\fs\ft} \mid \fs,\ft \in \Tab(\lambda),\ \lambda \notlessdom
\alpha(n,r)' \} = \{ y_{\fs\ft} \mid \fs,\ft \in
\Tab(\lambda),\ \lambda' \notdom \alpha(n,r) \}
\]
is a $\LP$-basis of $\Delta$. The rest of the argument is almost exactly
the same as for part (a), except that coefficients are in $\LP$. The
power of $v$ in equation \eqref{e10} causes no trouble, as $v$ is
invertible.
\end{proof}

\begin{rmk} \label{r3}
(i) Parts (a), (b) of Theorem \ref{t2} are connected by
  \cite{BDM:kernel}*{Thm.~7.4(c)}, which says that when $v \mapsto 1$
  in $\Bbbk$, the annihilators of $\V^{\otimes r}$ and
  $M^{\alpha(n,r)}$ coincide. It follows that (b) implies (a) in
  Theorem~\ref{t2}.  On the other hand, we proved (a) directly without
  assuming (b), based on the main result of \cite{BDM:kernel}, and (a)
  is really all we need.  (ii) There is no Hopf algebra structure on
  $\Hec$ properly deforming that of $\Z[W_n]$, so there is no
  interesting ``$q$-analogue'' of the diagonal action of $\Bbbk[W_n]$
  on $\V^{\otimes r}$; thus it doesn't make sense to ask for the
  annihilator of $\V^{\otimes r}$ in the context of part (b).
\end{rmk}

The following Lemma will be applied to deduce the Corollary to
Theorem~\ref{t2} that follows, which in turn is used in proving
Theorem~\ref{t1}.

\begin{lem*}
  For any subset $U$ of $W_n$, the image of $\{T_w \mid w \in W_n
  \setminus U \}$ is a $\LP$-basis of $\Hec/\Delta$, where $\Delta$ is
  the submodule spanned by $\{C_x \mid x \in U\}$. The corresponding
  statement holds over $\Bbbk$ upon specialization $v \mapsto 1$.
\end{lem*}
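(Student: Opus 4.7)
The plan is to prove the slightly stronger statement that the combined set
\[
\mathcal{B} := \{C_w \mid w \in U\} \cup \{T_w \mid w \in W_n \setminus U\}
\]
is a $\LP$-basis of $\Hec$. Granting this, the Lemma follows at once: since $\Delta$ is by definition the $\LP$-span of the first part of $\mathcal{B}$, we obtain a direct-sum decomposition $\Hec = \Delta \oplus \Delta'$, where $\Delta'$ is the $\LP$-span of $\{T_w \mid w \notin U\}$. The images of $\{T_w \mid w \notin U\}$ therefore form a $\LP$-basis of $\Hec/\Delta$.

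To prove that $\mathcal{B}$ is a basis, I would invoke the unitriangularity given by the first equation in \eqref{eq:KL}:
\[
C_w = T_w + \sum_{y<w} (-1)^{l(w)+l(y)} \ov{p_{y,w}}\, T_y,
\]
the sum running over $y$ strictly less than $w$ in the Bruhat--Chevalley order. Extend this partial order to any total order $\preceq$ on $W_n$, and write the change-of-basis matrix $M$ from $\mathcal{B}$ to $\{T_w \mid w \in W_n\}$ with rows and columns indexed by $W_n$ in $\preceq$-order. The column of $M$ corresponding to the $w$th element of $\mathcal{B}$ has a $1$ in the $T_w$-row (whether $w \in U$ or $w \notin U$), and any further nonzero entries occur only in rows $T_y$ with $y<w$, hence $y \prec w$. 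Thus $M$ is upper unitriangular with $\det M = 1$, so $M$ is invertible over $\LP$ and $\mathcal{B}$ is a basis.

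For the specialization at $v \mapsto 1$, the entries of $M$ lie in $\LP$, so $M$ specializes to a unitriangular matrix over $\Bbbk$ with $1$'s on the diagonal, which is again invertible. Hence $\mathcal{B}$ descends to a $\Bbbk$-basis of $\Hec_\Bbbk \cong \Bbbk[W_n]$, and the same direct-sum decomposition gives the $\Bbbk$-version of the statement.

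There is no substantive obstacle in this argument: it is a formal consequence of the unitriangular relation \eqref{eq:KL} between the Kazhdan--Lusztig $C$-basis and the standard $T$-basis, together with the standard fact that unitriangular matrices over any commutative ring are invertible. The only minor choice involved is the extension of Bruhat order to a total order, and any such extension works equally well.
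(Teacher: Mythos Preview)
Your proof is correct. Both your argument and the paper's rest on the same unitriangular relation \eqref{eq:KL} between the $C$-basis and the $T$-basis, but the packaging differs. The paper works inside the quotient $\Hec/\Delta$: it first inverts \eqref{eq:KL} to write each $T_w$ in the $C$-basis, passes to the quotient where $\{\tilde{C}_x \mid x \notin U\}$ is visibly a basis, and then inverts the resulting unitriangular system a second time to show that $\{\tilde{T}_w \mid w \notin U\}$ spans, leaving linear independence to the reader. You instead prove the slightly stronger auxiliary statement that the mixed set $\mathcal{B}=\{C_w \mid w\in U\}\cup\{T_w \mid w\notin U\}$ is already a basis of $\Hec$, via a single unitriangular change-of-basis matrix, and then read off the quotient statement from the direct-sum decomposition $\Hec=\Delta\oplus\Delta'$. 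Your route is a bit more economical (one inversion rather than two) and delivers linear independence for free; the paper's route stays entirely within the quotient where the claim lives. Either way the content is the same M\"{o}bius-inversion-over-Bruhat-order idea.
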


\begin{proof}
This is essentially the same idea as M\"{o}bius inversion over the
poset $W_n$ under the Bruhat--Chevalley order, using the unitriangular
relation \eqref{eq:KL} between the bases $\{ C_x \mid x \in W_n\}$,
$\{ T_x \mid x \in W_n\}$. By inverting the unitriangular matrix
giving the basis transition in \eqref{eq:KL}, we see that
\[
T_w = C_w + \sum_{y<w} b_{y,w} C_y \quad (b_{y,w} \in \LP).
\]
This implies a similar relation holds in the image $\Hec/\Delta$, that is,
\[
\tilde{T}_w = \tilde{C}_w + \sum_{y<w,\, y \notin U} b_{y,w} \tilde{C}_y
\quad (b_{y,w} \in \LP)
\]
where we set $\tilde{T}_w := T_w + \Delta$, $\tilde{C}_w := C_w +
\Delta$ in the quotient. Clearly, the set $\{ \tilde{C}_x \mid x
\notin U\}$ is a basis of $\Hec/\Delta$. Inverting again, we see that
\[
\tilde{C}_w = \tilde{T}_w + \sum_{y<w,\, y \notin U} d_{y,w} \tilde{T}_y
\quad (d_{y,w} \in \LP).
\]
Thus, $\Hec/\Delta$ is spanned by $\{\tilde{T}_w \mid w \in W_n
\setminus U \}$. We leave the proof of linear independence of that set
to the reader.
\end{proof}

\begin{cor*}
The image of $\{ T_x \in \Hec \mid \RSK(x) \dom \alpha(n,r) \}$ is a
$\LP$-basis of the quotient $\Hec/\Delta$, where $\Delta$ is the
annihilator of $M^{\alpha(n,r)}$. Under specialization $v \mapsto 1$
in $\Bbbk$, the image of $\{ x \in W_n \mid \RSK(x) \dom \alpha(n,r)
\}$ is a $\Bbbk$-basis of $\Bbbk[W_n]/\Delta$, where $\Delta =
\ker(\Phi)$ is the annihilator of $\V^{\otimes r}$.
\end{cor*}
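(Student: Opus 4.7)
The corollary is an immediate combination of Theorem~\ref{t2} with the preceding Lemma, so the plan is simply to plug them together. Set $U := \{x \in W_n \mid \RSK(x) \notdom \alpha(n,r)\}$, so that the complement in $W_n$ is exactly the indexing set $W_n \setminus U = \{x \in W_n \mid \RSK(x) \dom \alpha(n,r)\}$ appearing in the corollary.

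For the first assertion, I would invoke Theorem~\ref{t2}(b), which states that $\{C_x \mid x \in U\}$ is a $\LP$-basis of $\Delta$, the annihilator of $M^{\alpha(n,r)}$ in $\Hec$. With this identification, the hypothesis of the Lemma is satisfied with that choice of $U$, and the conclusion of the Lemma yields at once that the image of $\{T_w \mid w \in W_n \setminus U\}$ is a $\LP$-basis of $\Hec/\Delta$. That is exactly the first statement of the corollary.

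For the second assertion, I would repeat the same reasoning after specialization $v \mapsto 1$. By Theorem~\ref{t2}(a), the specialized set $\{C_x \mid x \in U\}$ is a $\Bbbk$-basis of $\ker(\Phi)$, the annihilator of $\V^{\otimes r}$ under the $\Bbbk[W_n]$-action, and in the specialization $T_w$ becomes $w \in W_n$. The ``corresponding statement over $\Bbbk$'' in the Lemma then gives that the image of $\{w \in W_n \mid w \notin U\} = \{x \in W_n \mid \RSK(x) \dom \alpha(n,r)\}$ is a $\Bbbk$-basis of $\Bbbk[W_n]/\ker(\Phi)$, as desired.

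There is no real obstacle here beyond bookkeeping; all the substantive work has been absorbed into Theorem~\ref{t2} and the Lemma. The only point that requires a moment's attention is that the two parts of Theorem~\ref{t2} use identically the same indexing condition $\RSK(x) \notdom \alpha(n,r)$, so the same set $U$ works to produce both the generic ($\LP$) and specialized ($\Bbbk$, $v=1$) versions of the statement without any compatibility gymnastics.
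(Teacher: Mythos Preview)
Your proposal is correct and matches the paper's own proof essentially verbatim: set $U=\{x\in W_n\mid \RSK(x)\notdom\alpha(n,r)\}$, apply Theorem~\ref{t2}(b) together with the Lemma for the $\LP$-statement, and then either specialize or use Theorem~\ref{t2}(a) with the specialized Lemma for the $\Bbbk$-statement. The paper additionally notes that the second statement can be obtained from the first by specializing $T_x\mapsto x$, but your route via Theorem~\ref{t2}(a) is one of the two alternatives the paper itself offers.
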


\begin{proof}
The first statement follows from part (b) of Theorem \ref{t2}, by
taking $U$ in the Lemma to be the set of $x$ in $W_n$ such that
$\RSK(x) \notdom \alpha(n,r)$. The second statement follows from the
first, as $T_x$ becomes $x$ upon specialization $v \mapsto
1$. Alternatively, it follows from part (a) of Theorem \ref{t2}, by
making the same choice for $U$ and specializing in the Lemma.
\end{proof}

We can now give the proof of the main result.

\begin{proof}[Proof of Theorem \ref{t1}]
By the Corollary, $\{P(w)^{\otimes r} \mid w \in W_n, \RSK(w) \dom
\alpha(n,r)\}$ is a $\Bbbk$-basis of $\im(\Phi) \cong
\Bbbk[W_n]/\ker(\Phi)$. By \cite{BDM:kernel}*{Lem.~6.2},
\[
\lambda \dom \alpha(n,r) \iff \lambda_1 \ge n-r
\]
and by Schensted's theorem \cite{Schensted} (see also \cite{Stanley})
we have
\[
\RSK(w) = \lambda \implies \IS(w) = \lambda_1 
\]
where we define $\IS(w)$ to be the length of the longest increasing
subsequence of $(w(1), w(2), \dots, w(n))$.
Putting these facts together shows that
\[
\lambda = \RSK(x) \dom \alpha(n,r) \iff \IS(\lambda) \ge n-r
\]
which gives the first basis in Theorem \ref{t1}.

The existence of the second basis in Theorem \ref{t1} follows from the
first, using another observation of Schensted, that
$\DS(w)=\IS(\rev{w})$ for any $w$ in $W_n$, where $\DS(w)$ is the
length of the longest decreasing subsequence of $(w(1), \dots, w(n))$
and $\rev{w} = (w(n),\dots, w(1))$ is the reverse of $w$, already
considered in the proof of Proposition \ref{p4}.  The map
\[
\textstyle \sum a_w w \mapsto \sum a_w \rev{w} = \sum a_w ww_0
\quad (a_w \in \Bbbk)
\]
given by right multiplication by $w_0$ defines a linear involution of
$\Bbbk[W_n]$ carrying $\{w \mid \DS(w) \ge k\}$ onto $\{w \mid
\text{IS}(w) \ge k\}$, for any $k$. It induces a linear involution on
$\im(\Phi)$ which is given by right multiplication by the matrix
$P(w_0)^{\otimes r}$; this clearly interchanges the two bases.
\end{proof}

\begin{rmk}\label{r4}
Theorem \ref{t1} has a counterpart for the $\Bbbk$-submodule
$\V^{\otimes r} \otimes \vv_n$ of $\V^{\otimes (r+1)}$, which we
identify with $\V^{\otimes r}$.  The restriction of the diagonal
action of $W_n$ to the subgroup
\[
W_{n-1} = \{ w \in W_n \mid w(n)=n \} \subset W_n
\]
gives an action on $\V^{\otimes r}$ fixing $\vv_n$. Let $\Phi':
\Bbbk[W_{n-1}] \to \End_\Bbbk(\V^{\otimes r})$ be the corresponding
representation.  Then \emph{the set of $P(w)^{\otimes r}$ indexed by
  $w \in W_{n-1}$ having an increasing (resp., decreasing) subsequence
  of length $n-1-r$ is a basis of $\im(\Phi')$}. The proof is nearly
identical with that of Theorem \ref{t1}; we leave the details to the
reader.
\end{rmk}

\section{Connections with Schur--Weyl duality}\label{s4}\noindent
Let $\A = \Bbbk[W_n]$ be the group algebra of $W_n$. The diagonal
action of $W_n$ makes $\V^{\otimes r}$ into an $\A$-module. Let
\[
\A' = \End_\A(\V^{\otimes r}) = \{\varphi \in \End_\Bbbk(\V^{\otimes r})
\mid \varphi(\alpha t) = \alpha \varphi(t), \alpha \in \A, t \in
\V^{\otimes r} \},
\]
the commutant of $\A$.  Then $\V^{\otimes r}$ is also an $\A'$-module,
with $\varphi \in \A'$ acting by $\varphi \cdot t = \varphi(t)$ for
any $t \in \V^{\otimes r}$. Each $\alpha \in \A$ induces an
$\A'$-homomorphism $f_\alpha: \V^{\otimes r} \to \V^{\otimes r}$
defined by $f_\alpha(t) = \alpha t$. Now consider the bicommutant
(double centralizer)
\[
\A'' = \End_{\A'}(\V^{\otimes r}) = \{\psi \in \End_\Bbbk(\V^{\otimes
  r}) \mid \psi f = f \psi, \text{ for all } f \in \A' \}
\]
where the multiplication here is functional composition.  Then the map
\begin{equation}\label{eq:dc} 
\Phi: \A \to \End_{\A'}(\V^{\otimes r}) = \A'', \quad \alpha \mapsto f_\alpha. 
\end{equation}
is an $\Bbbk$-algebra homomorphism. It is abstractly the same map as
the representation $\Phi$ considered in \eqref{e:Phi}, with restricted
codomain.

If $\Bbbk$ is a field of characteristic zero or characteristic larger
than $n$, then $\V^{\otimes r}$ is semisimple as an $\A$-module and
Jacobson's density theorem \cite{Jacobson:45} (see also
\cite{Jacobson}*{\S4.3} or \cite{Lang}*{Chap.~XVII, Theorem 3.2})
implies that $\Phi$ is surjective.  By the main result of \cite{BDM}
(see also \cite{Donkin}*{\S6}), $\Phi$ is surjective in general, for
any unital commutative ring $\Bbbk$.

Let $\Ptn(r,n)$ be the partition algebra \cites{Martin:book,
  Martin:94, Martin:96, Jones} over $\Bbbk$ on $2r$ vertices with
parameter $n$. It has a basis indexed by the set partitions
(equivalence relations) on the set $\{1,\dots,r, 1', \dots, r'\}$;
basis elements are often depicted by diagrams on $2r$ vertices labeled
by elements of that set, with a path connecting two vertices if and
only if they lie in the same subset of the set partition. The action
of $\Ptn(r,n)$ on $\V^{\otimes r}$ is described explicitly in
\cite{HR}, to which we refer for basic properties of partition
algebras. Let $\Psi: \Ptn(r,n) \to \End_\Bbbk(\V^{\otimes r})$ be the
representation afforded by the action.

\begin{prop}[Schur--Weyl duality]\label{p6}
The commutant $\A' = \End_{W_n}(\V^{\otimes r})$ is the image of the
representation $\Psi: \Ptn(r,n) \to \End_\Bbbk(\V^{\otimes r})$. The
bicommutant $\A'' = \im(\Phi)$ is the centralizer algebra
$\End_{\Ptn(r,n)}(\V^{\otimes r})$.
\end{prop}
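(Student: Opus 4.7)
The plan is to deduce both statements by combining existing Schur--Weyl duality results with a standard double-centralizer argument.

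For part (a), I would simply appeal to the main results of \cite{BDM} and \cite{Donkin}. Those papers establish that $\Psi: \Ptn(r,n) \to \End_\Bbbk(\V^{\otimes r})$ has image precisely equal to the centralizer $\End_{W_n}(\V^{\otimes r}) = \A'$; this holds over any unital commutative ring $\Bbbk$, not merely in the semisimple setting. No additional work is needed beyond citing these references.

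For part (b), the key observation is that endomorphisms of $\V^{\otimes r}$ commuting with every element of $\Ptn(r,n)$ (acting through $\Psi$) are exactly the endomorphisms commuting with every element of $\im(\Psi)$. Using part (a), this gives
\[
  \End_{\Ptn(r,n)}(\V^{\otimes r}) = \End_{\im(\Psi)}(\V^{\otimes r})
  = \End_{\A'}(\V^{\otimes r}) = \A''.
\]
It then remains to identify $\A''$ with $\im(\Phi)$. But the map $\Phi$ in \eqref{eq:dc} has codomain $\A''$ by construction, and it agrees with the representation of \eqref{e:Phi} after restricting the codomain. The paragraph preceding the proposition already records that $\Phi: \A \to \A''$ is surjective (via Jacobson density in the semisimple case, and via \cite{BDM}, \cite{Donkin} in general), so $\im(\Phi) = \A'' = \End_{\Ptn(r,n)}(\V^{\otimes r})$.

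There is no genuine obstacle here: the content of the proposition is the conjunction of two nontrivial inputs (Schur--Weyl duality for the partition algebra, and surjectivity of $\Phi$ onto the bicommutant), both of which are available in the cited literature. The role of the proof is essentially bookkeeping, matching the abstract double-centralizer description $(\A,\A',\A'')$ with the two explicit algebras $\Ptn(r,n)$ and $\im(\Phi)$ acting on tensor space.
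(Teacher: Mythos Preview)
Your argument is correct and follows exactly the same two-step structure as the paper: first identify $\im(\Psi)$ with $\A'$, then invoke the surjectivity of $\Phi$ onto $\A''$ to get $\im(\Phi)=\A''=\End_{\Ptn(r,n)}(\V^{\otimes r})$. The only discrepancy is in attribution: the paper cites \cite{HR}*{Thm.~3.6} (Halverson--Ram) for the equality $\im(\Psi)=\A'$, reserving \cite{BDM} and \cite{Donkin} solely for the surjectivity of $\Phi$, whereas you route both inputs through \cite{BDM}/\cite{Donkin}.
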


\begin{proof}
The fact that $\A' = \End_{W_n}(\V^{\otimes r})$ is the image of
$\Psi$ is \cite{HR}*{Thm.~3.6}; the combinatorial proof given there is
valid over any $\Bbbk$. The aforementioned surjectivity of $\Phi$ then
implies the second claim.
\end{proof}

If we now assume that $\Bbbk=\R$, the following shows that the study
of the set of nonnegative invariants in $\End_{\Ptn(r,n)}(\V^{\otimes
  r})$ reduces to the study of the set $\Omega$ of doubly stochastic
elements of $\im(\Phi)$.

\begin{cor*}
Assume that $\Bbbk=\R$. Then the set of all nonnegative matrices in
$\im(\Phi) = \End_{\Ptn(r,n)}(\V^{\otimes r})$ identifies with the set
of nonnegative scalar multiples of the set $\Omega$ of doubly
stochastic matrices in $\im(\Phi)$.
\end{cor*}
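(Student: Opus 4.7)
The plan is to exploit the fact that every matrix in $\im(\Phi)$ automatically has constant row sums and constant column sums (both equal to a single scalar $s$ depending only on $M$), and then to normalize a nonnegative $M \in \im(\Phi)$ by dividing by $s$.

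First, I would verify that each $P(w)^{\otimes r} \in \Gamma$ is doubly stochastic: it is a Kronecker product of permutation matrices, hence itself a permutation matrix, so every row sum and every column sum equals $1$. It follows that for any $M = \sum_{w \in W_n} c_w\, P(w)^{\otimes r}$ in $\im(\Phi)$, every row sum and every column sum equals the same scalar $s := \sum_w c_w$. This scalar depends only on $M$, not on the representing linear combination, since it can be read directly off any single row of $M$.

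Next, I would take an arbitrary nonnegative $M \in \im(\Phi)$ and split into cases according to the sign of $s$. If $s = 0$, then each row of $M$ consists of nonnegative reals summing to $0$, forcing $M = 0 = 0 \cdot P(1)^{\otimes r}$, which is a nonnegative scalar multiple of the doubly stochastic matrix $P(1)^{\otimes r} \in \Omega$. If $s > 0$, then $s^{-1} M$ lies in $\im(\Phi)$ (a subspace), has nonnegative entries, and has row and column sums equal to $1$; thus $s^{-1} M \in \Omega$, and $M = s \cdot (s^{-1} M)$ realizes $M$ as a nonnegative scalar multiple of an element of $\Omega$. The reverse inclusion is immediate, since $\Omega \subseteq \im(\Phi)$ and entrywise nonnegativity is preserved under multiplication by a nonnegative scalar while membership in the subspace $\im(\Phi)$ is preserved under any scaling.

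There is no serious obstacle in this argument; the only mildly subtle point is the well-definedness of the common row/column sum $s$ as a function of $M$ alone, which falls out immediately from the fact that $\im(\Phi)$ is spanned by the doubly stochastic matrices in $\Gamma$.
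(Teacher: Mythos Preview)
Your proof is correct and follows essentially the same approach as the paper's: both observe that every $M \in \im(\Phi)$ has all row and column sums equal to a common scalar $s$ (since the spanning set $\Gamma$ consists of doubly stochastic matrices), then normalize by $s$ when $s>0$ and note $M=0$ when $s=0$. Your additional remark that $s$ is well-defined as a function of $M$ (read off any row) is a nice clarification but does not differ in substance from the paper's argument.
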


\begin{proof}
If $M$ is a matrix in $\im(\Phi)$, it may be written as a linear
combination of elements of the set $\Gamma$ of $P(w)^{\otimes r}$, for
$w \in W_n$. The rows and columns of the $P(w)^{\otimes r}$ all sum to
$1$; hence the rows and columns of $M$ all sum to the same value.  If
the entries of $M$ are nonnegative, then so is the common value $s$ of
the row and column sums. If $s\ne 0$ then $s^{-1}M$ is doubly
stochastic, so $M$ is a positive multiple of that doubly stochastic
matrix. If $s=0$ then $M = [0]$ must be the zero matrix, which is also
zero times a doubly stochastic matrix.

Conversely, suppose that $D \in \im(\Phi)$ is doubly stochastic. Then
it is a nonnegative matrix in $\End_{\Ptn(r,n)}(\V^{\otimes r})$,
hence the same is true of any nonnegative scalar multiple.
\end{proof}

\begin{rmk}\label{r5}
Return to general $\Bbbk$. In the situation of Remark \ref{r4}, there
is an action of the ``half'' partition algebra $\Ptn(r+\frac{1}{2},
n)$ on $\V^{\otimes r} \cong \V^{\otimes r} \otimes \vv_n$, where
$\Ptn(r+\frac{1}{2}, n)$ is the subalgebra of $\Ptn(r+1, n)$ spanned
by all diagrams with an edge connecting vertices $r$, $r'$; see
\cite{HR} for details. All of the results in this section generalize
to the half partition algebra. In particular, the bicommutant of the
action of $W_{n-1}$ is equal to
\[
\im(\Phi') = \End_{\Ptn(r+\frac{1}{2}, n)}(\V^{\otimes r}).
\]
Again, we leave the details to the interested reader. Remark \ref{r4}
gives a basis of this algebra.
\end{rmk}

\section{Equations for $\im(\Phi)$ and $\Omega$}\label{s5}\noindent
There is another symmetric group $\Sym_r$ acting on $\V^{\otimes r}$,
by place-permutation, and its commutant algebra is the Schur algebra
$\End_{\Sym_r}(\V^{\otimes r})$ studied in \cites{Green,S-Martin},
etc. We write it as $\Sym_r$ to emphasize that the actions of $\Sym_r$
and $W_n$ on tensors are very different.  Write $(i_1\cdots
i_r)^\sigma$ for the effect of place-permuting $i_1 \cdots i_r$
according to $\sigma \in \Sym_r$.  By \cite{BDM}*{Prop.~3.2}, combined
with the fact that $\Ptn(r,n)$ is generated by $\Sym_r$ and the
elements $p_1$, $p_{3/2}$ in the notation of \cite{HR}*{(1.10)}, an
$n^r \times n^r$ matrix $X = [x_{i_1\cdots i_r,\, j_1\cdots j_r}]$
belongs to the bicommutant $\A'' = \im(\Phi)$ if and only if
\begin{enumerate}\renewcommand{\labelenumi}{(\roman{enumi})}
\item $x_{i_1\cdots i_r,\, j_1\cdots j_r} = x_{(i_1\cdots i_r)^\sigma,
  (j_1\cdots j_r)^\sigma}$, for all $\sigma \in \Sym_r$.
\item $x_{i_1\cdots i_r,\, j_1\cdots j_r} = 0$ if ($i_1 = i_2$ but $j_1 \ne j_2$) or
  ($i_1 \ne i_2$ but $j_1 = j_2$).
\item $\sum_{i=1}^n x_{i\, i_2\cdots i_r,\, j_1\cdots j_r} =
  \sum_{j=1}^n x_{i_1\cdots i_r,\, j\, j_2\cdots j_r}$, for all
  $i_1,\dots, i_r,\, j_1,\dots, j_r$.
\end{enumerate}
Condition (i) is the condition that $X$ is in the Schur algebra, and
(iii) is equivalent to $X$ commuting with $J_n \otimes {I_n}^{\otimes
  (r-1)}$, where $J_n=[1]$ is the $n \times n$ matrix of all $1$'s and
$I_n=[\delta_{ij}]$ is the $n \times n$ identity matrix. Thanks to
(i), conditions (ii), (iii) can be place-permuted to any other places.

Finally, if $\Bbbk = \R$, including the additional conditions
\begin{enumerate}\renewcommand{\labelenumi}{(\roman{enumi})}
  \setcounter{enumi}{3}
\item $x_{i_1\cdots i_r,\, j_1\cdots j_r} \ge 0$,
\item $\sum_{i_1\cdots i_r} x_{i_1\cdots i_r,\, j_1\cdots j_r} = 1 =
  \sum_{j_1\cdots j_r} x_{i_1\cdots i_r,\, j_1\cdots j_r}$
\end{enumerate}
(for all $i_1\cdots i_r,\, j_1\cdots j_r$) along with conditions
(i)--(iii) gives a description of the set $\Omega$ of doubly
stochastic elements of $\im(\Phi) = \End_{P_r(n)}(\V^{\otimes r})$.

\subsection*{Acknowledgments}\noindent
The author is grateful to R.A.~Brualdi and David Roberson for useful
discussions on an earlier version of this manuscript.

%%%%%%%%%%%%%%%%%%%%%%%%%%%%%%%%%%%%%%%%%%%%%%%%%%%%%%%%%%%%%%%%%%%%%%%%%%%%
\begin{bibdiv}
  \begin{biblist}

\bib{Bapat-Raghavan}{book}{
   author={Bapat, R. B.},
   author={Raghavan, T. E. S.},
   title={Nonnegative matrices and applications},
   series={Encyclopedia of Mathematics and its Applications},
   volume={64},
   publisher={Cambridge University Press, Cambridge},
   date={1997},
%   pages={xiv+336},
%   isbn={0-521-57167-7},
%   review={\MR{1449393}},
%   doi={10.1017/CBO9780511529979},
}

\bib{Birkhoff}{article}{
   author={Birkhoff, Garrett},
   title={Three observations on linear algebra},
%   language={Spanish},
   journal={Univ. Nac. Tucum\'{a}n. Revista A.},
   volume={5},
   date={1946},
   pages={147--151},
%   review={\MR{0020547}},
}

\bib{BDM}{article}{
   author={Bowman, Chris},
   author={Doty, Stephen},
   author={Martin, Stuart},
   title={Integral Schur-Weyl duality for partition algebras},
   journal={Algebr. Comb.},
   volume={5},
   date={2022},
   number={2},
   pages={371--399},
%   review={\MR{4426444}},
%   doi={10.5802/alco.214},
}
		
\bib{BDM:kernel}{article}{
   author={Bowman, Chris},
   author={Doty, Stephen},
   author={Martin, Stuart},
   title={An integral second fundamental theorem of invariant theory for
   partition algebras},
   journal={Represent. Theory},
   volume={26},
   date={2022},
   pages={437--454},
%   review={\MR{4403137}},
%   doi={10.1090/ert/593},
}

\bib{BDM:canon}{article}{
   author={Bowman, Chris},
   author={Doty, Stephen},
   author={Martin, Stuart},
   title={Canonical bases and new applications of increasing and
     decreasing subsequences to invariant theory},
  date={2022},
  eprint={arXiv.org:2206.12001},
  status={preprint},
}

\bib{Brualdi:06}{book}{
   author={Brualdi, Richard A.},
   title={Combinatorial matrix classes},
   series={Encyclopedia of Mathematics and its Applications},
   volume={108},
   publisher={Cambridge University Press, Cambridge},
   date={2006},
%   pages={x+544},
%   isbn={978-0-521-86565-4},
%   isbn={0-521-86565-4},
%   review={\MR{2266203}},
%   doi={10.1017/CBO9780511721182},
}

\bib{BR}{book}{
   author={Brualdi, Richard A.},
   author={Ryser, Herbert J.},
   title={Combinatorial matrix theory},
   series={Encyclopedia of Mathematics and its Applications},
   volume={39},
   publisher={Cambridge University Press, Cambridge},
   date={1991},
%   pages={x+367},
%   isbn={0-521-32265-0},
%   review={\MR{1130611}},
%   doi={10.1017/CBO9781107325708},
}

\bib{Brualdi-Cao}{article}{
   author={Brualdi, Richard A.},
   author={Cao, Lei},
   title={Pattern-avoiding $(0,1)$-matrices and bases of permutation
   matrices},
   journal={Discrete Appl. Math.},
   volume={304},
   date={2021},
   pages={196--211},
%   issn={0166-218X},
%   review={\MR{4296199}},
%   doi={10.1016/j.dam.2021.07.039},
}

\iffalse %%%%%%%%%%%%%%%%%%%%%%%%%%%%%%%%%%%%%%%%%%%%%%%%%%
\bib{Deligne}{article}{
   author={Deligne, P.},
   title={La cat\'{e}gorie des repr\'{e}sentations du groupe
     sym\'{e}trique $S_t$,
   lorsque $t$ n'est pas un entier naturel},
%   language={French, with English and French summaries},
   conference={
      title={Algebraic groups and homogeneous spaces},
   },
   book={
      series={Tata Inst. Fund. Res. Stud. Math.},
      volume={19},
      publisher={Tata Inst. Fund. Res., Mumbai},
   },
   date={2007},
   pages={209--273},
%   review={\MR{2348906}},
}
\fi %%%%%%%%%%%%%%%%%%%%%%%%%%%%%%%%%%%%%%%%%%%%%%%%%%%%%%%%%%

\bib{DJ}{article}{
   author={Dipper, Richard},
   author={James, Gordon},
   title={Representations of Hecke algebras of general linear groups},
   journal={Proc. London Math. Soc. (3)},
   volume={52},
   date={1986},
   number={1},
   pages={20--52},
%   issn={0024-6115},
%   review={\MR{812444}},
%   doi={10.1112/plms/s3-52.1.20},
}

\bib{Donkin}{article}{
   author={Donkin, Stephen},
   title={Double centralisers and annihilator ideals of Young permutation
   modules},
   journal={J. Algebra},
   volume={591},
   date={2022},
   pages={249--288},
%   issn={0021-8693},
%   review={\MR{4337801}},
%   doi={10.1016/j.jalgebra.2021.09.024},
}

\bib{Farahat}{article}{
   author={Farahat, H. K.},
   title={Sets of linearly independent permutation matrices},
   journal={J. London Math. Soc. (2)},
   volume={2},
   date={1970},
   pages={696--698},
%   issn={0024-6107},
%   review={\MR{268194}},
%   doi={10.1112/jlms/2.Part_4.696},
}

\bib{Farahat-Mirsky}{article}{
   author={Farahat, H. K.},
   author={Mirsky, L.},
   title={Permutation endomorphisms and refinement of a theorem of Birkhoff},
   journal={Proc. Cambridge Philos. Soc.},
   volume={56},
   date={1960},
   pages={322--328},
%   issn={0008-1981},
%   review={\MR{124345}},
%   doi={10.1017/s0305004100034629},
}

\bib{Fulton}{book}{
   author={Fulton, William},
   title={Young tableaux},
   series={London Mathematical Society Student Texts},
   volume={35},
%   note={With applications to representation theory and geometry},
   publisher={Cambridge University Press, Cambridge},
   date={1997},
%   pages={x+260},
%   isbn={0-521-56144-2},
%   isbn={0-521-56724-6},
%   review={\MR{1464693}},
}

\bib{Geck}{article}{
   author={Geck, Meinolf},
   title={Kazhdan-Lusztig cells and the Murphy basis},
   journal={Proc. London Math. Soc. (3)},
   volume={93},
   date={2006},
   number={3},
   pages={635--665},
%   issn={0024-6115},
%   review={\MR{2266962}},
%   doi={10.1017/S0024611506015930},
}

\bib{Gibson}{article}{
   author={Gibson, Peter M.},
   title={Generalized doubly stochastic and permutation matrices over a
   ring},
   journal={Linear Algebra Appl.},
   volume={30},
   date={1980},
   pages={101--107},
%   issn={0024-3795},
%   review={\MR{568782}},
%   doi={10.1016/0024-3795(80)90185-8},
}

\bib{Green}{book}{
   author={Green, James A.},
   title={Polynomial representations of ${\rm GL}_{n}$},
   series={Lecture Notes in Mathematics},
   volume={830},
   publisher={Springer-Verlag, Berlin-New York},
   date={1980},
%   pages={vi+118},
%   isbn={3-540-10258-2},
%   review={\MR{606556}},
}
  
\bib{HR}{article}{
   author={Halverson, Tom},
   author={Ram, Arun},
   title={Partition algebras},
   journal={European J. Combin.},
   volume={26},
   date={2005},
   number={6},
   pages={869--921},
 %  issn={0195-6698},
 %  review={\MR{2143201}},
 %  doi={10.1016/j.ejc.2004.06.005},
}

\bib{Jacobson:45}{article}{
   author={Jacobson, N.},
   title={Structure theory of simple rings without
     finiteness assumptions},
   journal={Trans. Amer. Math. Soc.},
   volume={57},
   date={1945},
   pages={228--245},
%   issn={0002-9947},
%   review={\MR{11680}},
%   doi={10.2307/1990204},
}

\bib{Jacobson}{book}{
   author={Jacobson, Nathan},
   title={Basic algebra. II},
   publisher={W. H. Freeman and Co., San Francisco, Calif.},
   date={1980},
%   pages={xix+666},
%   isbn={0-7167-1079-X},
%   review={\MR{571884}},
}

\bib{Johnsen}{article}{
   author={Johnsen, E. C.},
   title={Essentially doubly stochastic matrices. I. Elements of the theory
   over arbitrary fields},
   journal={Linear Algebra Appl.},
   volume={4},
   date={1971},
   pages={255--282},
%   issn={0024-3795},
%   review={\MR{294377}},
%   doi={10.1016/0024-3795(71)90020-6},
}

\bib{Jones}{article}{
   author={Jones, V. F. R.},
   title={The Potts model and the symmetric group},
   conference={
      title={Subfactors},
      address={Kyuzeso},
      date={1993},
   },
   book={
      publisher={World Sci. Publ., River Edge, NJ},
   },
   date={1994},
   pages={259--267},
%   review={\MR{1317365}},
}

\bib{KL}{article}{
   author={Kazhdan, David},
   author={Lusztig, George},
   title={Representations of Coxeter groups and Hecke algebras},
   journal={Invent. Math.},
   volume={53},
   date={1979},
   number={2},
   pages={165--184},
%   issn={0020-9910},
%   review={\MR{560412}},
%   doi={10.1007/BF01390031},
}

\bib{Knuth}{book}{
   author={Knuth, Donald E.},
   title={The art of computer programming. Vol. 3},
   note={Sorting and searching;
   Second edition},
   publisher={Addison-Wesley, Reading, MA},
   date={1998},
%   pages={xiv+780},
%   isbn={0-201-89685-0},
%   review={\MR{3077154}},
}

\bib{Koenig}{article}{
   author={K\"{o}nig, D\'{e}nes},
   title={\"{U}ber Graphen und ihre Anwendung auf Determinantentheorie und
   Mengenlehre},
 %  language={German},
   journal={Math. Ann.},
   volume={77},
   date={1916},
   number={4},
   pages={453--465},
%   issn={0025-5831},
%   review={\MR{1511872}},
%   doi={10.1007/BF01456961},
}

\bib{Lang}{book}{
   author={Lang, Serge},
   title={Algebra},
   series={Graduate Texts in Mathematics},
   volume={211},
   edition={3},
   publisher={Springer-Verlag, New York},
   date={2002},
%   pages={xvi+914},
%   isbn={0-387-95385-X},
%   review={\MR{1878556}},
%   doi={10.1007/978-1-4613-0041-0},
}

\bib{Lai}{article}{
   author={Lai, Hang-Chin},
   title={On the linear algebra of generalized doubly stochastic matrices
   and their equivalence relations and permutation basis},
   journal={Japan J. Appl. Math.},
   volume={3},
   date={1986},
   number={2},
   pages={357--379},
%   issn={0910-2043},
%   review={\MR{899230}},
%   doi={10.1007/BF03167108},
}

\bib{Lu}{book}{
   author={Lusztig, G.},
   title={Hecke algebras with unequal parameters},
   series={CRM Monograph Series},
   volume={18},
   publisher={American Mathematical Society, Providence, RI},
   date={2003},
%   pages={vi+136},
%   isbn={0-8218-3356-1},
%   review={\MR{1974442}},
%   doi={10.1090/crmm/018},
}

\bib{Marcus-Minc}{book}{
   author={Marcus, Marvin},
   author={Minc, Henryk},
   title={A survey of matrix theory and matrix inequalities},
   publisher={Allyn and Bacon, Inc., Boston, Mass.},
   date={1964},
%   pages={xvi+180},
%   review={\MR{0162808}},
}

\bib{Martin:book}{book}{
   author={Martin, Paul},
   title={Potts models and related problems in statistical mechanics},
   series={Series on Advances in Statistical Mechanics},
   volume={5},
   publisher={World Scientific Publishing Co., Inc., Teaneck, NJ},
   date={1991},
%   pages={xiv+344},
%   isbn={981-02-0075-7},
%   review={\MR{1103994}},
%   doi={10.1142/0983},
}

\bib{Martin:94}{article}{
   author={Martin, Paul},
   title={Temperley-Lieb algebras for nonplanar statistical mechanics---the
   partition algebra construction},
   journal={J. Knot Theory Ramifications},
   volume={3},
   date={1994},
   number={1},
   pages={51--82},
%   issn={0218-2165},
%   review={\MR{1265453}},
%   doi={10.1142/S0218216594000071},
}

\bib{Martin:96}{article}{
   author={Martin, Paul},
   title={The structure of the partition algebras},
   journal={J. Algebra},
   volume={183},
   date={1996},
   number={2},
   pages={319--358},
%   issn={0021-8693},
%   review={\MR{1399030}},
%   doi={10.1006/jabr.1996.0223},
}

\bib{S-Martin}{book}{
   author={Martin, Stuart},
   title={Schur algebras and representation theory},
   series={Cambridge Tracts in Mathematics},
   volume={112},
   publisher={Cambridge University Press, Cambridge},
   date={1993},
%   pages={xvi+232},
%   isbn={0-521-41591-8},
%   review={\MR{1268640}},
%   doi={10.1017/CBO9780511470899},
}

\bib{Minc:permanents}{book}{
   author={Minc, Henryk},
   title={Permanents},
   series={Encyclopedia of Mathematics and its Applications},
   volume={6},
%   note={With a foreword by Marvin Marcus},
   publisher={Addison-Wesley Publishing Co., Reading, Mass.},
   date={1978},
%   pages={xviii+205},
%   isbn={0-201-13505-1},
%   review={\MR{504978}},
}

\bib{Minc:non-neg}{book}{
   author={Minc, Henryk},
   title={Nonnegative matrices},
   series={Wiley-Interscience Series in Discrete Mathematics and
   Optimization},
%   note={A Wiley-Interscience Publication},
   publisher={John Wiley \& Sons, Inc., New York},
   date={1988},
%   pages={xiv+206},
%   isbn={0-471-83966-3},
%   review={\MR{932967}},
}

\iffalse %%%%%%%%%%%%%%%%%%%%%%%%%%%%%%%%%%%%%%%%%%%%%%%
\bib{MVV}{article}{
   author={Mulmuley, Ketan},
   author={Vazirani, Umesh V.},
   author={Vazirani, Vijay V.},
   title={Matching is as easy as matrix inversion},
   journal={Combinatorica},
   volume={7},
   date={1987},
   number={1},
   pages={105--113},
%   issn={0209-9683},
%   review={\MR{905157}},
%   doi={10.1007/BF02579206},
}
\fi %%%%%%%%%%%%%%%%%%%%%%%%%%%%%%%%%%%%%%%%%%%%%%%%%%%%%

\bib{Murphy:92}{article}{
   author={Murphy, G. E.},
   title={On the representation theory of the symmetric groups and
   associated Hecke algebras},
   journal={J. Algebra},
   volume={152},
   date={1992},
   number={2},
   pages={492--513},
%   issn={0021-8693},
%   review={\MR{1194316}},
%   doi={10.1016/0021-8693(92)90045-N},
}

\bib{Murphy:95}{article}{
   author={Murphy, G. E.},
   title={The representations of Hecke algebras of type $A_n$},
   journal={J. Algebra},
   volume={173},
   date={1995},
   number={1},
   pages={97--121},
%   issn={0021-8693},
%   review={\MR{1327362}},
%   doi={10.1006/jabr.1995.1079},
}
		
\bib{RSS}{article}{
   author={Raghavan, K. N.},
   author={Samuel, Preena},
   author={Subrahmanyam, K. V.},
   title={RSK bases and Kazhdan-Lusztig cells},
%   language={English, with English and French summaries},
   journal={Ann. Inst. Fourier (Grenoble)},
   volume={62},
   date={2012},
   number={2},
   pages={525--569},
%   issn={0373-0956},
%   review={\MR{2985509}},
%   doi={10.5802/aif.2687},
}

\bib{RS}{article}{
  author={Roberson, David},
  author={Schmidt, Simon},
  title={Quantum symmetry vs nonlocal symmetry},
  date={2021},
  eprint={arXiv.org:2012.13328},
  status={preprint},
  }

\bib{Ryser}{book}{
   author={Ryser, Herbert John},
   title={Combinatorial mathematics},
   series={The Carus Mathematical Monographs, No. 14},
   publisher={Published by The Mathematical Association of America;
   distributed by John Wiley and Sons, Inc., New York},
   date={1963},
%   pages={xiv+154},
%   review={\MR{0150048}},
}

\bib{Schensted}{article}{
   author={Schensted, C.},
   title={Longest increasing and decreasing subsequences},
   journal={Canadian J. Math.},
   volume={13},
   date={1961},
   pages={179--191},
%   issn={0008-414X},
%   review={\MR{121305}},
%   doi={10.4153/CJM-1961-015-3},
}

\bib{Stanley}{article}{
   author={Stanley, Richard P.},
   title={Increasing and decreasing subsequences and their variants},
   conference={
      title={International Congress of Mathematicians. Vol. I},
   },
   book={
      publisher={Eur. Math. Soc., Z\"{u}rich},
   },
   date={2007},
   pages={545--579},
%   review={\MR{2334203}},
%   doi={10.4171/022-1/21},
}

\bib{vN}{article}{
   author={von Neumann, John},
   title={A certain zero-sum two-person game equivalent to the optimal
   assignment problem},
   conference={
      title={Contributions to the theory of games, vol. 2},
   },
   book={
      series={Annals of Mathematics Studies, no. 28},
      publisher={Princeton University Press, Princeton, N. J.},
   },
   date={1953},
   pages={5--12},
%   review={\MR{0054920}},
}
  
\end{biblist}
\end{bibdiv}

\end{document}